\numberwithin{equation}{section}
\date{\today}
\theoremstyle{plain}
\newtheorem{theorem}{Theorem}[section]
\newtheorem{lemma}[theorem]{Lemma}
\newtheorem{claim}[theorem]{Claim}
\newtheorem{proposition}[theorem]{Proposition}
\newtheorem{conjecture}[theorem]{Conjecture}
\newtheorem{problem}[theorem]{Problem}
\theoremstyle{definition}
\newtheorem{definition}[theorem]{Definition}
\newtheorem{remark}[theorem]{Remark}
\DeclareMathOperator{\Bin}{Bin}
\newcommand\ab[1]{\lvert #1 \rvert}
\def\moverlay{\mathpalette\mov@rlay}
\def\mov@rlay#1#2{\leavevmode\vtop{%
		\baselineskip\z@skip \lineskiplimit-\maxdimen
		\ialign{\hfil$\m@th#1##$\hfil\cr#2\crcr}}}
\newcommand{\charfusion}[3][\mathord]{
	#1{\ifx#1\mathop\vphantom{#2}\fi
		\mathpalette\mov@rlay{#2\cr#3}
	}
	\ifx#1\mathop\expandafter\displaylimits\fi}
\renewenvironment{proof}[1][\proofname]
{\par\pushQED{\qed}
	\normalfont\topsep6\p@\@plus6\p@\relax\trivlist
	\item[\hskip\labelsep\bfseries#1\@addpunct{.}]
	\ignorespaces}
{\popQED\endtrivlist\@endpefalse}
\renewcommand{\P}{\mathcal P}
\newcommand{\poly}{\text{poly}}
\DeclareMathOperator{\twr}{twr}
\title{Is it easy to regularize a hypergraph with easy links?}
\author{Lior Gishboliner\thanks{Department of Mathematics, University of Toronto, ON, Canada. {Email}: \texttt{lior.gishboliner@utoronto.ca}. Research supported by the NSERC Discovery Grant ``Problems in Extremal and Probabilistic Combinatorics".} \and Asaf Shapira\thanks{School of Mathematics, Tel Aviv University, Israel. Email: \texttt{asafico@tau.ac.il}. Supported in part by ERC Consolidator Grant 63438 and NSF-BSF Grant 20196.} \and Yuval Wigderson\thanks{Institute for Theoretical Studies, ETH Z\"urich, Switzerland. Email: \texttt{yuval.wigderson@eth-its.ethz.ch}. Research supported by Dr.\ Max R\"ossler, the Walter Haefner Foundation, and the ETH Z\"urich Foundation.}}
\date{}
\begin{document}
\maketitle
\begin{abstract}
A partition of a (hyper)graph is $\varepsilon$-homogeneous if the edge densities between almost all clusters are either at most $\varepsilon$ or at least $1-\varepsilon$. Suppose a $3$-graph has the property that the link of every vertex has an $\varepsilon$-homogeneous partition of size
$\poly(1/\varepsilon)$. Does this guarantee that the $3$-graph also has a small homogeneous partition?
Terry and Wolf proved that such a $3$-graph has an $\varepsilon$-homogeneous partition of size given
by a wowzer-type function. Terry recently improved this to a double exponential bound, and conjectured that this bound is tight.
Our first result in this paper disproves this conjecture by giving an improved (single) exponential bound, which is best possible.
We further obtain an analogous result for $k$-graphs of all uniformities $k \geq 3$.

The above problem is part of a much broader programme which seeks to understand the conditions under which a (hyper)graph has
small $\varepsilon$-regular partitions. While this problem is fairly well understood for graphs, the situation is (as always) much more involved
already for $3$-graphs. For example, it is natural to ask if one can strengthen our first result by only requiring
each link to have $\varepsilon$-regular partitions of size $\poly(1/\varepsilon)$. Our second result shows that surprisingly the answer is `no', namely, a $3$-graph might only have regular partitions
of tower-type size, even though the link of every vertex has an $\varepsilon$-regular partition of polynomial size.
\end{abstract}

\section{Introduction}
Many important questions in extremal combinatorics are of a ``local-to-global'' nature: given local information about a certain discrete object, can we deduce something about its global structure? For example, extremal statements such as Tur\'an's theorem \cite{MR18405} or the Brown--Erd\H os--S\'os conjecture \cite{MR351888} assert that if a (hyper)graph has no local portion that is too dense, then one can deduce a stronger bound on its global density.

When dealing with hypergraphs, a natural type of local condition that one can impose concerns the \emph{links} of vertices. Here, given a $k$-uniform hypergraph $H$ and a vertex $v \in V(H)$, the link $L(v)$ of $v$ is the $(k-1)$-uniform hypergraph on vertex set $V(H) \setminus \{v\}$ and edge set $\{S: S \cup \{v\} \in E(H)\}$. The collection of all links $\{L(v):v \in V(H)\}$ provide a local view of the hypergraph $H$, and it is natural to ask which global properties can be deduced from this set of local views. 

There are a number of well-known results and problems along these lines. For example, a famous result of Garland \cite{MR320180}, which is critical to the study of high-dimensional expanders (see e.g.\ the survey \cite{MR3966743}), roughly states that if all links are good spectral expanders, then the entire hypergraph is a good expander; see \cite[Theorem 2.5]{MR3966743} for the precise statement. In Tur\'an theory, a statement of this type is given by a famous conjecture attributed to Erd\H os and S\'os (see \cite[Conjecture 1]{MR753720}), which states that if all links of a $3$-uniform hypergraph are bipartite, then the hypergraph has edge density at most $\frac 14$. In Ramsey theory, a recent result of Fox and He \cite{MR4332486} essentially completely resolves the question of what global information on the independence number of a $3$-uniform hypergraph (henceforth $3$-graph) can be deduced from the local information of forbidden configurations within its links. 

In this paper, we study analogous questions that arise in the theory of graph and hypergraph regularity. A bipartite graph with parts $A,B$ is said to be \emph{$\varepsilon$-regular} if for all $X\subseteq A, Y \subseteq B$ with $\ab{X} \geq \varepsilon \ab A, \ab{Y} \geq \varepsilon \ab B$ it holds that
$\ab{d(X,Y)-d(A,B)} \leq \varepsilon$, where $d(X,Y)\coloneqq e(X,Y)/(\ab{X}\ab{Y})$ denotes the edge density. The famous regularity lemma of Szemer\'edi \cite{MR540024} asserts that every graph has a vertex partition into parts $V_1,\dots,V_k$ such that all but an $\varepsilon$-fraction of pairs of parts $(V_i,V_j)$ define an $\varepsilon$-regular bipartite graph, and such that $k \leq K(\varepsilon)$ for some constant $K(\varepsilon)$ depending only on $\varepsilon$; such a partition is called an $\varepsilon$-regular partition of size $k$. While Szemer\'edi's regularity lemma is of extraordinary importance in graph theory, theoretical computer science, number theory, and other areas of mathematics, its applicability is often limited by the terrible quantitative bounds it produces. Indeed, Szemer\'edi's proof showed that $K(\varepsilon)$ is at most a tower-type function\footnote{We recall that the \emph{tower function} is defined by $\twr(0)=1$ and $\twr(x+1)=2^{\twr(x)}$ for $x \geq 0$.} of $1/\varepsilon$, and a famous construction of Gowers \cite{Gowers} demonstrates that such tower-type bounds are necessary in the worst case. As such, there is a great deal of interest in proving more reasonable quantitative bounds on $K(\varepsilon)$ if one assumes that $G$ has certain extra structure. Such assumptions can be global in nature (such as assuming that $G$ is defined by semi-algebraic relations of bounded complexity \cite{MR3451124,MR3585030,MR3852184}) or local in nature (such as assuming that $G$ forbids a fixed induced bipartite pattern \cite{MR2341924,FPS,MR2815610,MR4350155}). In both of these cases, one can obtain an $\varepsilon$-regular partition with only $\poly(1/\varepsilon)$ parts, a substantial improvement over the tower-type bound that is necessary without such assumptions. In this paper we focus on questions of this type in the setting of (hyper)graphs, but it is worth noting that similar questions have recently been studied in other areas such as arithmetic regularity in  groups \cite{CPT,TW20,TW21,MR3386249}.

In hypergraphs, the most natural extension of the notion of $\varepsilon$-regularity is now termed \emph{weak $\varepsilon$-regularity}. We say that a tripartite $3$-graph with parts $A,B,C$ is weakly $\varepsilon$-regular if for all $X \subseteq A, Y \subseteq B, Z \subseteq C$ with $\ab X \geq \varepsilon \ab A, \ab Y \geq \varepsilon \ab B, \ab Z \geq \varepsilon \ab C$, we have
\[
	\ab{d(X,Y,Z)-d(A,B,C)} \leq \varepsilon,
\]
where $d(X,Y,Z) \coloneqq e(X,Y,Z)/(\ab X \ab Y \ab Z)$ denotes the edge density.

Extending Szemer\'edi's work, Chung \cite{MR1099803} proved a regularity lemma for $3$-graphs, which states that every $3$-graph has a vertex partition into at most $K(\varepsilon)$ parts, such that all but an $\varepsilon$-fraction of the triples of parts define a weakly $\varepsilon$-regular tripartite $3$-graph. Her proof again yields tower-type bounds on $K(\varepsilon)$, and it is easy to embed Gowers's example into a $3$-graph to deduce that such tower-type bounds are necessary in the worst case. We remark that Chung's notion is now called \emph{weak} regularity because it is too weak to be of use in most applications of the regularity method, such as the proof of the removal lemma; for such applications, more refined notions of hypergraph regularity had to be developed, and we refer to \cite{MR2195580,MR2373376,MR2167756} for more information.

Following the pattern of local-to-global questions discussed above, it is natural to ask whether weak regularity of a $3$-graph can be deduced from regularity of its links, and it is not hard to see that the answer is yes. Indeed, let $H$ be a tripartite $3$-graph with parts $A,B,C$, and suppose that every vertex $c \in C$ has a link\footnote{Strictly speaking, the link of a vertex $c \in C$ consists of a bipartite graph between $A$ and $B$, plus isolated vertices corresponding to every vertex in $C\setminus \{c\}$. But when working in the partite setting, it is more natural to delete these isolated vertices.} $L(c)$ which is an $\varepsilon$-regular bipartite graph between $A$ and $B$. Note that for all $X \subseteq A, Y \subseteq B, Z \subseteq C$, we have
\[
	e_H(X,Y,Z) = \sum_{c \in Z} e_{L(c)}(X,Y),
\]
simply by the definition of the link. As a consequence, the edge densities satisfy
\[
	d_H(X,Y,Z) = \frac{1}{\ab Z}\sum_{c \in Z} d_{L(c)}(X,Y).
\]
Now suppose that $\ab X \geq \varepsilon \ab A,\ab Y \geq \varepsilon \ab B, \ab Z \geq \varepsilon \ab C$. Then the assumption that $L(c)$ is $\varepsilon$-regular implies that $\ab{d_{L(c)}(X,Y) -d_{L(c)}(A,B)}\leq \varepsilon$. Note too that  $\frac{1}{\ab Z} \sum_{c \in Z} d_{L(c)}(A,B)=d_H(A,B,Z)$, hence we conclude that
\[
	\ab{d_H(X,Y,Z)-d_H(A,B,Z)} \leq \varepsilon.
\]
If we now assume that every vertex $a \in A$ has an $\varepsilon$-regular link as well, we may run the same argument and deduce that $\ab{d_H(A,B,Z)-d_H(A,B,C)} \leq \varepsilon$. 

In other words, we have just proved that if all vertices\footnote{In fact, we did not even need to use this assumption for vertices of $B$. On the other hand, only assuming that vertices in one part (say $C$) have regular links does not suffice; indeed, define a 3-graph by having half of the vertices of $C$ be isolated and the other half form an edge with every pair in $A \times B$. This 3-graph is not $\varepsilon$-regular but every vertex in $C$ has an $\varepsilon$-regular link.} of $H$ have an $\varepsilon$-regular link, then $H$ itself is weakly $2\varepsilon$-regular. Of course, the utility of the regularity lemma is that it allows us to decompose \emph{any} graph into $\varepsilon$-regular pieces, so the natural next question is as follows: if every vertex-link in $H$ has a small regular partition, does this guarantee that $H$ itself has a small weakly $\varepsilon$-regular partition?

Our first result answers this question very strongly in the negative, even if we require that the links have small regular partitions with a regularity parameter much smaller than $\varepsilon$.
\begin{theorem}\label{thm:lower bound}
	Let $0 < \delta \leq \varepsilon$ such that $\varepsilon$ is sufficiently small. Then for every $n \geq n_0(\delta)$, there is an $n$-vertex tripartite $3$-graph $H$ with the following properties. 
	\begin{itemize}
		\item For all $v \in V(H)$, the link $L_H(v)$ has a $\delta$-regular partition with $O(\delta^{-8})$ parts.
		\item Every weakly $\varepsilon$-regular equipartition of $H$ has at least $\twr\left( \Omega(\log \frac 1 \varepsilon) \right)$ parts.
	\end{itemize}
\end{theorem}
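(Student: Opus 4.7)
The plan is to construct the required tripartite $3$-graph $H$ by lifting Gowers's bipartite lower-bound construction for the Szemer\'edi regularity lemma in a way that distributes the tower-type complexity across many vertices of $H$, so that each individual link remains structurally simple. The underlying intuition is that Gowers's bipartite graph $G$ on $X \cupdot Y$, which admits no $\varepsilon$-regular equipartition with fewer than $\twr(\Omega(\log \tfrac{1}{\varepsilon}))$ parts, can be approximated in cut norm by an \emph{average} of structurally very simple bipartite graphs, one per vertex of a new ``auxiliary'' part $C$. Each such simple bipartite graph will serve as the link of a vertex of $C$, while their collective average retains the Gowers-type complexity needed for the lower bound on $H$.

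Concretely, I would set $V(H) = A \cupdot B \cupdot C$ with $|A|=|B|=|C|=n/3$, identify $A,B$ with $X,Y$, and for each $c \in C$ choose two coarse partitions $\mathcal{P}^A_c, \mathcal{P}^B_c$ of $A, B$ into $k=O(1)$ parts together with a pattern $F_c \subseteq [k]\times[k]$; the edge set is then defined by $\{a,b,c\} \in E(H)$ iff $(\mathcal{P}^A_c(a), \mathcal{P}^B_c(b)) \in F_c$. With this definition $L_H(c)$ is exactly a blow-up of $F_c$ with respect to $(\mathcal{P}^A_c, \mathcal{P}^B_c)$---a union of at most $k^2$ complete bipartite graphs---so it has a $0$-regular partition with only $O(k)=O(1)$ parts, far stronger than the required $\delta$-regular partition with $O(\delta^{-8})$ parts. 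The family $\{(\mathcal{P}^A_c, \mathcal{P}^B_c, F_c)\}_{c \in C}$ must be chosen so that the averaged weighted bipartite graph
\[
    \bar G(a,b) \;=\; \frac{1}{|C|}\,\bigl|\{c \in C : (\mathcal{P}^A_c(a), \mathcal{P}^B_c(b)) \in F_c\}\bigr|
\]
approximates Gowers's $G$ in cut norm.

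Given such a construction, the lower bound follows from a direct averaging argument. Let $\mathcal{Q}$ be any $\varepsilon$-regular equipartition of $H$ and let $\mathcal{Q}_A, \mathcal{Q}_B, \mathcal{Q}_C$ be its restrictions to $A, B, C$. For a typical pair $(A', B') \in \mathcal{Q}_A \times \mathcal{Q}_B$, most triples $(A', B', Q)$ with $Q \in \mathcal{Q}_C$ are weakly $\varepsilon$-regular in $H$; summing the weak-regularity inequality over these $Q$ with weights $|Q|/|C|$ shows that the bipartite pair $(A', B')$ is $O(\varepsilon)$-regular in $\bar G$, and hence also in $G$ by cut-closeness. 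Thus $\mathcal{Q}$ induces an $O(\varepsilon)$-regular bipartite equipartition of $G$ with at most $|\mathcal{Q}_A|+|\mathcal{Q}_B|$ parts, and Gowers's tower lower bound then forces $|\mathcal{Q}| \ge |\mathcal{Q}_A|+|\mathcal{Q}_B| \ge \twr(\Omega(\log \tfrac{1}{\varepsilon}))$.

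The main obstacles are twofold. First, one must design the family $\{(\mathcal{P}^A_c, \mathcal{P}^B_c, F_c)\}_c$ cleverly enough that $\bar G$ actually approximates Gowers's $G$ in cut norm; since each single $L_H(c)$ contributes only $O(1)$ ``bits'' of information to $\bar G(a,b)$, encoding Gowers's hierarchical structure is delicate and probably requires a multi-scale choice of partitions mirroring Gowers's iterated refinement. Second, and more delicately, the links $L_H(a)$ and $L_H(b)$ for $a\in A$, $b\in B$ must \emph{also} admit $\delta$-regular partitions of polynomial size, even though they depend on the entire family in a much less transparent way. A natural strategy is to draw the family from a suitable random product distribution, so that a typical $L_H(a)$ becomes a pseudorandom bipartite graph between $B$ and $C$---with only $k$ distinct edge densities, one per class of $\mathcal{P}^A_c$---and hence admits a polynomial-size $\delta$-regular partition by a standard concentration argument. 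Balancing these two requirements---randomness rich enough to encode Gowers's complexity in $\bar G$, yet structured enough to keep all three types of links simple---is where the main quantitative effort will lie.
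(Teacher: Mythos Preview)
Your plan has the right high-level picture—spread Gowers's complexity across $C$ so that individual links stay simple—but it contains a concrete error in the lower-bound step and, more importantly, it misses the key construction idea that makes the argument go through.

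The error: your transfer ``$(A',B')$ is $O(\varepsilon)$-regular in $\bar G$, and hence also in $G$ by cut-closeness'' does not work. Regularity of a pair $(A',B')$ is a statement about subsets of size $\varepsilon|A'|=\varepsilon n/K$, and on such small sets a cut-norm bound $\|\bar G-G\|_\square\le\gamma$ only controls density differences up to error $\gamma K^2/\varepsilon^2$. Since you are assuming for contradiction that $K$ is just below the tower threshold, this blow-up swamps everything unless $\gamma$ is itself inverse-tower small. You would need $\bar G=G$ exactly (or pointwise close), not merely cut-close; that is achievable in principle, but it is not what you wrote, and it constrains the construction much more tightly than your sketch suggests.

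The missing idea: your restriction that each $L_H(c)$ be a blow-up with $k=O(1)$ parts is what creates both of your ``main obstacles,'' and it is the wrong design choice. The paper instead takes the layers $G_1,\dots,G_t$ of Gowers's hierarchy (with $t=\Theta(\log\tfrac1\varepsilon)$) \emph{themselves} as the links: partition $C=C_1\cup\dots\cup C_t$ and let every $c\in C_r$ have link (a rescaled copy of) $G_r$. Although $G_r$ is a blow-up with $m_r$ (tower-many) parts, the crucial observation—which your plan does not anticipate—is that for all but the few smallest $r$ the graph $G_r$ is actually $\delta$-\emph{regular}, so the trivial two-part partition $\{A,B\}$ already serves as its $\delta$-regular partition; for the remaining small $r$ one tunes the sequence $m_r$ so that $m_r\le\poly(1/\delta)$. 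The links of $a\in A$ (and symmetrically $b\in B$) are then unions of just $t$ complete bipartite graphs $N_{G_r}(a)\times C_r$, and the Venn diagram of the $t$ neighborhoods together with $C_1,\dots,C_t$ gives a $0$-regular partition with at most $2^t+t\le\poly(1/\varepsilon)\le\poly(1/\delta)$ parts—no pseudorandomness argument is needed. For the lower bound the paper does not average down to a bipartite $\bar G$ but directly reruns Gowers's $\beta$-refinement argument inside $H$, using the parts $C_r$ to isolate the layer $G_r$ at each inductive step. (Once the construction is fixed this way, your averaging idea with $\bar G$ equal to a rescaled $G$ \emph{exactly} would also yield the tower lower bound, so that portion of your plan is salvageable; but the construction itself must change.)
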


In particular, as \emph{every} $3$-graph has a weakly $\varepsilon$-regular equipartition of tower-type size, \cref{thm:lower bound} states that having small regular partitions of all links is essentially useless: it provides no extra benefit, in terms of the size of a weakly regular partition, than assuming no such information. We remark that by adapting ideas from \cite{MS16} one can improve the lower bound to be of the form $\twr(\poly(\frac 1 \varepsilon))$; however, as we believe that the important issue is whether there is a sub-tower bound, we decided to stick with a simpler proof that gives only a logarithmic lower bound on the tower \nolinebreak height.

However, our other main result is positive, and says that information about regular partitions of the links \emph{is} useful, if we strengthen the notion of regularity. Namely, let us say that a bipartite graph with parts $A,B$ is \emph{$\varepsilon$-homogeneous} if $d(A,B) \in [0,\varepsilon] \cup [1-\varepsilon,1]$. It is easy to verify that homogeneity is a strictly stronger notion than regularity (up to a polynomial change in $\varepsilon$). Homogeneity for $k$-partite $k$-graphs is defined analogously; namely, a $k$-tuple of parts $A_1,\dots,A_k$ is \emph{$\varepsilon$-homogeneous} if $d(A_1,\dots,A_k) \in [0,\varepsilon] \cup [1-\varepsilon,1]$.
For convenience, we focus on the $k$-partite setting, but we remark that this restriction is not essential and can be easily removed (see \cref{rem:partite to general} for details).  
Thus, for a $k$-partite $k$-graph $H$ with parts $A_1,\dots,A_k$, we only consider vertex partitions which respect the partition into $A_1,\dots,A_k$.
Such a partition, consisting of a partition $\P_i$ of $A_i$ for every $i \in [k]$, is said to be \emph{$\varepsilon$-homogeneous} if the total sum of $|X_1|\dotsb|X_k|$ over all $k$-tuples $(X_1,\dots,X_k) \in \P_1 \times \dots \times \P_k$ which are not $\varepsilon$-homogeneous is at most $\varepsilon |A_1| \dotsb |A_k|$.

Note that it is no longer the case that all graphs or $k$-graphs have an $\varepsilon$-homogeneous partition of bounded size; for example, a random $k$-partite $k$-graph of edge density $\frac 12$ does not have a $\frac 14$-homogeneous partition into any bounded number of parts. However, our second main result demonstrates that if every link has a small homogeneous partition, then so does the hypergraph itself. 
Before stating this result precisely, we recall some of the history and known results on the question of which graphs and hypergraphs admit small homogeneous partitions.

In the case of graphs, the answer has been well-understood for some time, and it turns out that the key notion is that of having \emph{bounded VC-dimension}. 
We recall that the VC-dimension of a graph $G$ is the largest integer $d$ such that there exist distinct vertices $v_1,\dots,v_d \in V(G)$ with the property that for all $S \subseteq [d]$, there exists a vertex $w_S$ which is adjacent to $\{v_i : i \in S\}$ and non-adjacent to $\{v_j : j\notin S\}$. Generally, the precise VC-dimension of a graph is not very important, and we are only interested in the property of having \emph{bounded} VC-dimension, i.e.\ bounded by an absolute constant independent of the order of the graph. The property of having bounded VC-dimension turns out to be a deep and fundamental ``low-complexity'' notion for graphs, and a large number of works (e.g.\ \cite{FPS,MR4357431,1007.1670,2502.09576,MR3646875,MR4777869,2408.04165}) have established that certain difficult problems become much more tractable when restricted to graphs of bounded VC-dimension.

In particular, VC-dimension is intimately connected with regular and homogeneous partitions. Indeed, it is well-known \cite{MR2341924,FPS,MR2815610} that every graph with bounded VC-dimension has an $\varepsilon$-homogeneous (and hence $\poly(\varepsilon)$-regular) partition of size $(1/\varepsilon)^{O(1)}$. Moreover, this is an if and only if characterization in two distinct ways: first, a hereditary class of graphs admits homogeneous partitions of (any) bounded size if and only if it has bounded VC-dimension, which in turn happens if and only if it 
admits small $\varepsilon$-regular partitions (and otherwise tower-type bounds are required); see e.g.\ \cite{MR3943117,2404.01293} for details.

It is natural to ask for extensions of these characterizations to hypergraphs;
for example, which classes of hypergraphs admit polynomially-sized regular partitions or polynomially-sized homogeneous partitions? Moreover, as hypergraph regularity is substantially subtler and more involved than graph regularity, there are additional questions that arise, such as characterizing hypergraphs which admit full regularity partitions of sub-wowzer type. For more information on these questions, and for recent progress, see e.g.\ \cite{2111.01737,2404.01293,2404.02024}. In what follows, we focus on the existence and size of homogeneous partitions.

The first progress in this direction was due to Chernikov--Starchenko \cite{MR4350155} and to Fox--Pach--Suk \cite{FPS}, who independently introduced a strong notion of bounded VC-dimension for hypergraphs, and proved that such hypergraphs admit $\varepsilon$-homogeneous partitions of size $\poly(1/\varepsilon)$. More recently, Terry \cite[Theorem 1.16]{2404.01293} proved that this is an if and only if characterization in uniformity $3$: a class of $3$-graphs has bounded VC-dimension in this sense if and only if it admits polynomially-sized homogeneous partitions. However, in contrast to the graph case, the class of hypergraphs admitting homogeneous partitions of (some) bounded size is actually larger: it suffices for all links to have bounded VC-dimension. To state this precisely, we introduce the following definition \cite{2111.01737}.

\begin{definition}
    Let $k \geq 3$ and let $H$ be a $k$-graph. For distinct vertices $v_1,\dots,v_{k-2} \in V(H)$, their \emph{link} $L(v_1,\dots,v_{k-2})$ is the graph on 
$V(H) \setminus \{v_1,\dots,v_{k-2}\}$ with edge set 
$\{xy : xyv_1\dots v_{k-2} \in \nolinebreak E(H)\}$.
    The \emph{slicewise VC-dimension\footnote{In this context, ``slice'' is a synonym for ``link''. In \cite{2010.00726}, this quantity is called the \emph{VC$_1$-dimension} of $H$.}} of $H$ is defined as the maximum VC-dimension of a link $L(v_1,\dots,v_{k-2})$ over all $v_1,\dots,v_{k-2} \in V(H)$.
\end{definition}
That is, $H$ has bounded slicewise VC-dimension if and only if all of its links have bounded VC-dimension. 
Chernikov and Towsner \cite{2010.00726} proved that if $H$ has bounded slicewise VC-dimension, then $H$ admits an $\varepsilon$-homogeneous partition of bounded size (and in the case $k=3$, this result was independently proved by Terry and Wolf \cite{2111.01737}). Moreover, bounded slicewise VC-dimension turns out to be an if and only if characterization for when a hypergraph admits homogeneous partitions (see \cite[Theorem 2.34]{2111.01737} or \cite[Theorem 7.1]{2010.00726}) of any bounded size. The results of \cite{2010.00726,2111.01737}, however, give very weak quantitative information on the number of parts in the resulting homogeneous partition: Chernikov and Towsner use infinitary techniques that give no bound at all, and Terry and Wolf use the hypergraph regularity lemma, and thus obtain wowzer-type bounds on the number of parts in the homogeneous partition of $H$. 

These bounds were substantially improved by Terry, who gave a double-exponential upper bound \cite[Theorem 1.4]{2404.01274} in the case $k=3$. Moreover, Terry proved a single-exponential lower bound \cite[Theorem 6.8]{2404.01293}; more precisely, she constructed a $3$-graph with bounded slicewise VC-dimension which has no $\varepsilon$-homogeneous partitions with fewer than $2^{(1/\varepsilon)^{\Omega(1)}}$ parts. Terry writes that the main problem left open by her work is to close the gap between the single-exponential and double-exponential bounds, and she conjectured \cite{2404.01293,2404.01274} that the double-exponential upper bound is best possible. 

Our second main theorem disproves Terry's conjecture, showing that the single-exponential bound is the truth.
\begin{theorem}\label{thm:VC1}
    Let $H$ be a $k$-partite $k$-graph with bounded slicewise VC-dimension, and let $\varepsilon>0$. Then $H$ has an $\varepsilon$-homogeneous equipartition into $2^{(1/\varepsilon)^{O(1)}}$ parts.
\end{theorem}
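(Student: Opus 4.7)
My plan is to prove the theorem by an iterative energy-increment argument, treating $k=3$ as the base case and handling general $k$ by induction on $k$. For the inductive step, note that for every vertex $v\in V_k$, the link $L(v)$ is a $(k-1)$-graph of the same bounded slicewise VC-dimension, so by the inductive hypothesis it admits an $\eta$-homogeneous partition of size $2^{\poly(1/\eta)}$; this substitutes for the Fox--Pach--Suk theorem, which I apply in the base case $k=3$ to individual links (which are then graphs of bounded VC-dimension).

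For $k=3$, I track the mean-square energy
\[
    E(\P) \;=\; \sum_{(A, B, C) \,\in\, \P_1 \times \P_2 \times \P_3} \frac{|A||B||C|}{|V_1||V_2||V_3|}\, d_H(A, B, C)^2 \;\in\; [0, 1]
\]
of a tripartite partition $\P = (\P_1, \P_2, \P_3)$, which is monotone non-decreasing under refinement. Starting from the trivial partition, I repeatedly refine $\P$ while it fails to be $\varepsilon$-homogeneous. At each step I invoke a dichotomy on the bad cells: either in some non-homogeneous cell $(A, B, C)$ the link densities $\{d_{L(v)}(A, B) : v \in C\}$ exhibit $\varepsilon^{O(1)}$ variance, in which case I split $C$ by thresholding on their median; or these variances are small in every cell, in which case an averaging argument produces a vertex $v^* \in V_3$ whose link $L(v^*)$ carries total non-homogeneous mass at least $\varepsilon^{O(1)} |V_1||V_2|$ on $\P_1 \times \P_2$. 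In the latter case Fox--Pach--Suk gives an $\eta$-homogeneous partition of $L(v^*)$ of size $\poly(1/\eta)$, and I refine $\P_1, \P_2$ by it. In either case, $E(\P)$ grows by $\Omega(\varepsilon^{O(1)})$.

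Since $E \leq 1$, at most $\poly(1/\varepsilon)$ iterations are needed. Each iteration multiplies each $|\P_i|$ by at most $\poly(1/\varepsilon)$, so the final partition has size at most $\poly(1/\varepsilon)^{\poly(1/\varepsilon)} = 2^{\poly(1/\varepsilon)}$; a routine rebalancing step then converts this into an equipartition of the same asymptotic size. For $k\geq 4$ the analysis is identical, with Fox--Pach--Suk replaced by the inductive hypothesis. Even though the IH supplies only a $2^{\poly(1/\eta)}$-sized partition per link, the fact that only $\poly(1/\varepsilon)$ iterations are performed yields the overall bound $(2^{\poly(1/\varepsilon)})^{\poly(1/\varepsilon)} = 2^{\poly(1/\varepsilon)}$, so the single-exponential bound is preserved at every level of the induction.

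\textbf{Main obstacle.} The crux of the argument is establishing the dichotomy's energy increment in the low-variance case. Refining $(\P_1, \P_2)$ via Fox--Pach--Suk on $L(v^*)$ directly only improves $L(v^*)$'s \emph{bipartite} energy, which is only a $1/|V_3|$ share of the tripartite energy. To upgrade this to an $\Omega(\varepsilon^{O(1)})$ gain in $E(\P)$, one must argue that the low-variance assumption propagates from each cell $(A, B)$ to its refined sub-cells, so that homogenization of $L(v^*)$ there entails a similar homogenization of $L(v)$ for most $v\in V_3$. Controlling this propagation with only polynomial loss in $\varepsilon$ is the delicate technical step, and is precisely where a naive implementation would lose an additional exponential factor and produce Terry's double-exponential bound.
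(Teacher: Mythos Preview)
Your plan has a genuine gap exactly where you flag it. In the low-variance branch you need that refining $(\P_1,\P_2)$ by a Fox--Pach--Suk partition of a single link $L(v^*)$ increases the \emph{tripartite} energy $E(\P)$ by $\varepsilon^{O(1)}$. The mechanism you propose---that low variance of $\{d_{L(v)}(A,B):v\in C\}$ on a coarse cell $(A,B)$ ``propagates'' to its sub-cells $(A',B')$---is simply false. Zero variance on $(A,B)$ is compatible with maximal variance on every sub-cell: each $v\in C$ may have $d_{L(v)}(A,B)=\tfrac12$ while covering a completely different half of $A\times B$, so that on any fixed refinement the values $d_{L(v)}(A',B')$ are spread over $[0,1]$. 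In that situation the refinement pushes the bipartite energy of $L(v^*)$ up by $\varepsilon^{O(1)}$, but the averaged densities $d_H(A',B',C)=\frac{1}{|C|}\sum_{v\in C}d_{L(v)}(A',B')$ can stay exactly where they were, and $E(\P)$ need not increase at all. Bounded slicewise VC-dimension does not prevent the homogeneous partitions of different links from being mutually ``orthogonal'' in this sense; indeed, this is precisely why the problem admits a single-exponential lower bound. Without a way to force many links to agree on the refined cells, your iteration may stall, and the obvious repair---refining by the common refinement of many links' partitions simultaneously---is exactly the step that costs a second exponential. (There is also a smaller issue in the high-variance branch: splitting a single $C$ by a single median threshold yields an energy gain proportional to $|A||B||C|/n^3$, which is exponentially small once the partition has grown.)

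The paper sidesteps all of this by never refining $A,B$ through a single link. It first partitions the \emph{pair set} $A_1\times\cdots\times A_{k-1}$ (not the vertex classes separately) into $t=\poly(r/\varepsilon)$ classes $E_1,\dots,E_t$ such that any two $(k-1)$-tuples in the same $E_i$ have $A_k$-neighbourhoods differing in at most $\varepsilon n$ vertices; this uses the small homogeneous partition of every $(k-2)$-link, chained one coordinate at a time. Only then is $A_k$ partitioned, into at most $2^t$ parts according to which classes $E_i$ a vertex is a neighbour of, and this is the only exponential loss. The crucial feature is that the pair-partition is one global object controlling all vertices of $A_k$ simultaneously, so the propagation problem you identify never arises.
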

In addition to providing optimal bounds for this problem, \cref{thm:VC1} also gives a short and much simpler proof of the result of \cite{2010.00726,2111.01737,2404.01274} that bounded slicewise VC-dimension implies the existence of homogeneous partitions.

In fact, we deduce \cref{thm:VC1} from the following result, which follows the theme introduced in \cref{thm:lower bound}; it states that if all links in $H$ have homogeneous partitions of size $r$, then $H$ itself has a homogeneous partition of size exponential in $r$. This immediately implies \cref{thm:VC1} since, as discussed above, all graphs of bounded VC-dimension have homogeneous partitions of polynomial size. Moreover, it nicely complements \cref{thm:lower bound}: knowing that all links have small regular partitions is essentially useless for finding a regular partition of $H$, but knowing that all links have small homogeneous partitions does yield a small homogeneous partition of $H$.

\begin{theorem}\label{thm:upper bound}
    Let $\varepsilon \in (0,\frac 12)$, let $H$ be a $k$-partite $k$-graph with all parts of size $n$, and suppose that for all $v_1,\dots,v_{k-2} \in V(H)$ in distinct parts, the bipartite\footnote{In the $k$-partite setting, when $v_1,\dots,v_{k-2}$ come from distinct parts, we again delete the isolated vertices and view $L(v_1,\dots,v_{k-2})$ as a bipartite graph between the two parts not containing any of $v_1,\dots,v_{k-2}$.} graph $L(v_1,\dots,v_{k-2})$ has an $\varepsilon'$-homogeneous partition of size at most $r$, where $\varepsilon' = c_k \varepsilon^6$ for a small enough constant $c_k > 0$ depending only on $k$. Then $H$ has an $\varepsilon$-homogeneous equipartition into at most $2^{(r/\varepsilon)^{O(1)}}$ parts.
\end{theorem}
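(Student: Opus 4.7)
Plan: I would prove the theorem by induction on $k$, reducing larger $k$ to the base case $k=3$ by fixing $k-3$ vertices (in $k-3$ of the parts) appropriately; the polynomial slack $\varepsilon'=c_k\varepsilon^6$ is designed so that the cumulative error across the at most $k-2$ levels of recursion is absorbed. So assume now that $H$ is a $3$-partite $3$-graph on $A\cup B\cup C$ and that every vertex-link has an $\varepsilon'$-homogeneous bipartite partition of size at most $r$.

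The heart of the argument is an iterated partition-refinement scheme, controlled by a quadratic-energy increment. Define
\[
q(\P_A,\P_B,\P_C) \;=\; \frac{1}{\ab{A}\ab{B}\ab{C}}\sum_{(X,Y,Z)\in\P_A\times\P_B\times\P_C} \ab{X}\ab{Y}\ab{Z}\,d_H(X,Y,Z)^2 \;\in\;[0,1],
\]
which is monotone under refinement. The crucial quantitative point---and what distinguishes this argument from Szemer\'edi-type regularity, which necessarily runs into tower bounds---is that demanding \emph{homogeneity} (density in $[0,\varepsilon]\cup[1-\varepsilon,1]$) rather than mere regularity forces every non-homogeneous block to have density $d\in[\varepsilon,1-\varepsilon]$, and hence an energy deficit $d(1-d)\geq \varepsilon/2$ per unit of mass. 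Summed over a non-homogeneous total mass of at least $\varepsilon$, any refinement that simultaneously resolves all bad blocks therefore produces an energy jump of at least $\varepsilon^2/2$, so since $q\leq 1$ the process must halt after at most $O(1/\varepsilon^2)$ rounds.

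Inside each round I construct the refinement as follows. First I sample $\poly(r/\varepsilon)$ vertices from each part and take the common refinement of their link-partitions on the other two parts; this multiplies $|\P_A|,|\P_B|,|\P_C|$ by a factor $r^{\poly(r/\varepsilon)}=2^{\poly(r/\varepsilon)}$. Second, inside each bad block $(X,Y,Z)$ I further sub-partition $Z$ into at most $2^{r^2}$ classes by thresholding each vertex's vector of link-densities on the refined cells of $X\times Y$: within each such class the thresholded pattern is constant, and so the averaged density $d_H$ on the refined sub-blocks is forced to lie close to $\{0,1\}$. Combining, each round multiplies the partition size by $2^{\poly(r/\varepsilon)}$, so after $O(1/\varepsilon^2)$ rounds the total partition size is $(2^{\poly(r/\varepsilon)})^{O(1/\varepsilon^2)}=2^{(r/\varepsilon)^{O(1)}}$, which is the required bound; an equipartition of comparable size can then be obtained by routine further splitting.

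The step I expect to be the main obstacle is verifying that the sampled link-partitions actually produce the claimed $\Omega(\varepsilon^2)$ energy increment in each round. Three sources of slack must be tracked simultaneously, all of them absorbed by the polynomial gap $\varepsilon'=c_3\varepsilon^6$: (i) restricting an $\varepsilon'$-homogeneous partition of $L(c)$ from $A\times B$ to a sub-block $X\times Y$ inflates the homogeneity error by a factor $\ab{A}\ab{B}/(\ab{X}\ab{Y})$, which must remain much less than $\varepsilon$; (ii) moving from $\varepsilon'$-homogeneity of links to $\varepsilon$-homogeneity of $H$ incurs a further loss of order $\varepsilon'$ per cell; and (iii) the pattern-thresholding step on $Z$ costs another factor of $\varepsilon$. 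The bookkeeping needed to verify that a random sample of the claimed size simultaneously hits enough bad blocks to aggregate all these local improvements into a genuine $\Omega(\varepsilon^2)$ increase in $q$ is routine but delicate, and is where I expect the bulk of the technical weight of the proof to lie.
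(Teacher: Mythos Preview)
Your energy-increment scheme has a genuine gap, and you in fact located it yourself in obstacle~(i) without recognizing that it is fatal rather than merely ``delicate''. After even a single round of your refinement, each of $|\P_A|,|\P_B|$ has grown by a factor $r^{\poly(r/\varepsilon)}=2^{\poly(r/\varepsilon)}$, so a typical block $X\times Y$ satisfies $|A||B|/(|X||Y|)\geq 2^{\poly(r/\varepsilon)}$. To run the next round you must exploit the $\varepsilon'$-homogeneity of $L(c)$ \emph{restricted to $X\times Y$}, and as you say the error inflates by exactly this factor. But $\varepsilon'=c_3\varepsilon^6$ is only polynomially smaller than $\varepsilon$, so $\varepsilon'\cdot 2^{\poly(r/\varepsilon)}\gg 1$: the restricted link partition carries no information at all, and no further energy increment is available. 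A fixed polynomial slack can never absorb an exponential inflation; the iteration is stuck after one step. There is a secondary inconsistency in the thresholding step: after your first sub-step the number of refined cells in $X\times Y$ is $r^{\poly(r/\varepsilon)}$, not $r^2$, so either the pattern vector has exponential length (and the ``$2^{r^2}$'' bound is wrong) or you are thresholding against each $c$'s own link cells (in which case different $c\in Z$ see different cell structures and the classes are incoherent). The claimed reduction of general $k$ to $k=3$ by ``fixing $k-3$ vertices'' is likewise not a reduction: the resulting homogeneous partition depends on the fixed tuple, with no mechanism for gluing.

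The paper's proof avoids iteration entirely by a one-shot construction. The central step (Lemma~2.2) partitions $A_1\times\cdots\times A_{k-1}$ into only $t=(r/\varepsilon)^{O(1)}$ classes of \emph{$(k-1)$-tuples} (not of vertices) such that any two tuples in the same class have $C$-neighbourhoods differing on at most $\varepsilon n$ vertices; the key device is a ``chain'' of $k-1$ single-coordinate swaps, each swap controlled by one bipartite link partition, so that link homogeneity is invoked only at the global $n\times n$ scale and never on small sub-blocks. One then partitions $A_k$ into $2^t$ parts according to these neighbourhoods, repeats the construction once for each choice of distinguished part, and verifies (via a short witness-counting argument, Lemma~2.3) that the resulting vertex partition is $\varepsilon$-homogeneous. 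The single exponential appears exactly once, at the $2^t$ step, and there is no compounding.
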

\begin{remark}\label{rem:partite to general}
    In both \cref{thm:VC1,thm:upper bound},
    the statement for $k$-partite $k$-graphs immediately implies the corresponding result for general $k$-graphs.\footnote{Here, the definition of an $\varepsilon$-homogeneous partition of a (general) $k$-graph includes $k$-tuples $(X_1,\dots,X_k)$ where some of the $X_i$'s are equal; namely, the definition requires the sum of $|X_1|\dotsb|X_k|$ over all non-$\varepsilon$-homogeneous $k$-tuples of parts $(X_1,\dots,X_k)$ (including those which repeat parts) to be at most $\varepsilon |V(H)|^k$.}
    Indeed, given a $k$-graph $H$, we may pass to its $k$-partite cover $\widehat H$, whose vertex set is $k$ disjoint copies of $V(H)$ and whose edges are all transversal $k$-tuples corresponding to edges of $H$. It is easy to see that every link in $\widehat H$ is either empty or equal to the bipartite cover of a link in $H$, hence if all links in $H$ have small homogeneous equipartitions, then the same holds for $\widehat H$. We may then apply \cref{thm:upper bound} to $\widehat H$, obtaining $k$ different partitions of $V(H)$, one for each part of $V(\widehat H)$, each with at most $2^{(r/\varepsilon)^{O(1)}}$ parts. We now take the common refinement of these $k$ partitions to obtain a partition of $V(H)$ whose number of parts is $(2^{(r/\varepsilon)^{O(1)}})^k = 2^{(r/\varepsilon)^{O(1)}}$. Finally, it is not hard to check that if the original equipartition of $\widehat H$ was $\varepsilon$-homogeneous, then the resulting partition of $H$ is $\sqrt \varepsilon$-homogeneous. This is actually an instance of a more general fact (a simple consequence of Markov's inequality), namely that any refinement of an $\varepsilon$-homogeneous partition is itself $\sqrt \varepsilon$-homogeneous (see e.g.\ \cite[Lemma 2.1]{2404.01274}).
\end{remark}
\begin{remark}
    It is natural to ask for a version of \cref{thm:upper bound} where we assume that all links of $\ell$-tuples of vertices have small homogeneous partitions, for some $1 \leq \ell \leq k-2$. However, it is not hard to check that this assumption is weakest for $\ell=k-2$, in the following sense: 
    If in a $k$-partite $k$-graph, the link of every $\ell$-tuple has an $\varepsilon''$-homogeneous partition of size $r$, then all but $\varepsilon' n^{k-2}$ of the $(k-2)$-links have an $\varepsilon'$-homogeneous partition of size $r$, provided that $\varepsilon'' = \poly(\varepsilon')$ is small enough. Indeed, this partition is simply the $\varepsilon'$-homogeneous partition of the link of the first $\ell$ vertices in the $(k-2)$-tuple. The reason this works is that if a hypergraph has edge density very close to $0$ or $1$, then the same holds for the link of almost every vertex, so any given $\ell$-tuple extends to only few $(k-2)$-tuples for which the given partition is not $\varepsilon'$-homogeneous. Finally, our proof of \cref{thm:upper bound} tolerates a small number of $(k-2)$-tuples without an $\varepsilon'$-homogeneous partition of their link. 
    Indeed, one only needs to adapt the proof of Lemma \ref{lem:AxB partition}. To this end, one changes the definition of an $i$-good tuple $\mathbf{v} \cup \{x\}$ to require further that the link of $\mathbf{v}$ has an $\varepsilon'$-homogeneous partition. The rest of the proof carries through essentially as is (with slightly different numbers). 
    Summarizing, the analogue of Theorem \ref{thm:upper bound} holds for $\ell$-links, for any $1\leq \ell<k-2$. 
\end{remark}

\subsection{Proof overview}\label{sec:overview}
We now briefly sketch the proofs of our main theorems, starting with the upper bound, \cref{thm:upper bound}. For this high-level discussion, we restrict our attention to $3$-graphs, which already capture the heart of the problem. 

Thus, let $H$ be a $3$-partite $3$-graph with parts $A,B,C$, each of size $n$, and assume that the link of every vertex has a small $\varepsilon$-homogeneous equipartition, say of size $r = \poly(1/\varepsilon)$ (the case of general $r$ is no more complicated). It is not hard to show that the refinement of any homogeneous partition is again homogeneous (with only a polynomial loss in the parameters), so a natural approach is to simply take the common refinement of the homogeneous partitions of $L(c)$ over certain carefully chosen $c \in C$. This is essentially the approach taken by Terry \cite{2404.01274}, who first classifies the vertices in $c$ according to the ``structure'' of the homogeneous partition of $L(c)$. However, it seems very difficult to follow such an approach without incurring double-exponential bounds: Terry loses one exponential in this classification by structure, and another exponential from needing to take the common refinement of all of these partitions. The key new idea in our approach is to begin with an ``intermediate'' partition, which is \emph{not} a vertex partition, but rather a partition of $A \times B$ into a collection of (completely  unstructured) bipartite graphs. By working with such partitions for much of the proof, we are able to maintain polynomial dependencies almost throughout, and only pay a single exponential at the end to take a common refinement. As discussed above, Terry \cite{2404.01293} also proved a single-exponential lower bound for this problem, so this final step is in a sense unavoidable.

Namely, the first step is to partition $A \times B$ into bipartite graphs $E_1 \cup \dots \cup E_t$ with the following property: if $(a,b)$ and $(a',b')$ lie in the same class $E_i$ of this partition, then they have \emph{similar neighborhoods}, in the sense that the number of $c \in C$ such that $(a,b,c) \in E(H)$ but $(a',b',c) \notin E(H)$ (or vice versa) is at most $\varepsilon n$. 
To define this partition, let us first fix some $(a,b) \in A \times B$, and consider the homogeneous partition of $L(a)$. The vertex $b$ lies in some part, say $B_i$, of this partition. The key observation now is that for almost all $b' \in B_i$, the vertices $b$ and $b'$ have similar neighborhoods in the graph $L(a)$, simply because they lie in the same part of a homogeneous partition: almost all $c \in C$ have the same edge relation to both $b$ and $b'$. By the definition of the link $L(a)$, this immediately implies that $(a,b)$ and $(a,b')$ have similar neighborhoods in the sense above. As $|B_i| = n/r = \poly(\varepsilon)\cdot n$, we conclude that there are $\poly(\varepsilon)\cdot n$ choices of $b'$ such that $(a,b)$ and $(a,b')$ have similar neighborhoods. By repeating the same argument, but now looking at the link $L(b')$ and considering the part containing $a$, we find that there are also $\poly(\varepsilon)\cdot n$ choices of $a'$ such that $(a',b')$ and $(a,b')$ have similar neighborhoods. In total, we find $\poly(\varepsilon)\cdot n^2$ choices of $(a',b')$ which have a similar neighborhood to $(a,b)$. At this point, it is straightforward to use random sampling to find a partition of (almost all of) $A \times B$ into $t=\poly(1/\varepsilon)$ many bipartite graphs such that in each class (apart from a small exceptional class $E_0$), all pairs $(a,b),(a',b')$ have similar neighborhoods. 

As all pairs in a given part $E_i$ have nearly the same neighborhood in $C$, we can now partition $C$ into at most $2^t$ parts according to this information (and this is the only step where we pay an exponential). In this way, we obtain a homogeneous ``vertex-edge'' partition: we have partitioned $C$ and $A \times B$ into parts, such that for almost all choices of a part in $C$ and a part in $A \times B$, either almost all triples or almost none of the triples defined by these parts are edges of $H$. To convert this vertex-edge partition into a vertex partition of $A \cup B \cup C$, we simply repeat the argument above twice more, obtaining in turn partitions of $(A, B \times C)$ and of $(B, A \times C)$. Finally, it is not hard to show that the vertex partition of $A\cup B\cup C$ arising in this way is homogeneous, proving \cref{thm:upper bound}. 

We now turn to discussing the proof of the lower bound, \cref{thm:lower bound}. The key idea here is a simple observation about the structure of Gowers's \cite{Gowers} ingenious tower-type lower bound for Szemer\'edi's regularity lemma. Namely, Gowers's example can be viewed as an overlay of $t=\Theta(\log \frac{1}{\varepsilon})$ many bipartite graphs $G_1,\dots,G_t$, each of which is itself $\varepsilon$-regular (or, more precisely, has an $\varepsilon$-regular partition into a small number of parts). Nonetheless, these graphs interact in complicated ways, which causes their union to have no small $\varepsilon$-regular partition. 

In our proof of \cref{thm:lower bound}, we leverage this observation as follows. We place the exact same graphs $G_1,\dots,G_t$ on vertex set $A \cup B$. We also partition $C$ into $t$ parts $C_1,\dots,C_t$, and define a $3$-graph by declaring that the link of each $c \in C_i$ is precisely the graph $G_i$, for all $1 \leq i \leq t$. By construction, all links of vertices in $C$ have small $\varepsilon$-regular partitions. Moreover, this construction immediately implies that the link of every $a \in A$ is the union of $t$ complete bipartite graphs, namely between $N_{G_i}(a)$ and $C_i$, for all $i$; this structure readily yields a small $\varepsilon$-regular partition of $L(a)$, and the symmetric argument handles links of vertices in $B$. Finally, by modifying Gowers's analysis of his construction, we show that the hypergraph $H$ does not have any weakly $\varepsilon$-regular partitions into fewer than $\twr(\Omega(\log \frac{1}{\varepsilon}))$ parts, proving \cref{thm:lower bound}.

\paragraph{Paper organization:} We prove the upper bound \cref{thm:upper bound} in \cref{sec:upper}, and the lower bound \cref{thm:lower bound} in \cref{sec:lower}. We end in \cref{sec:conclusion} with some concluding remarks and three tantalizing open problems, related to analogues of the Erd\H os--Hajnal conjecture, R\"odl's theorem, and the induced counting lemma.
We omit floor and ceiling signs whenever these are not crucial. 

    \section{Proof of Theorem \ref{thm:upper bound}}\label{sec:upper}

    In this section we prove Theorem \ref{thm:upper bound}. Throughout this section, we assume for convenience that the number of vertices $n$ is divisible by quantities determined by the other parameters. All proofs work without this assumption with very minor changes. 
    
    We need the following simple lemma, showing that an $\varepsilon$-homogeneous partition of a bipartite graph translates to a partition of almost all vertices on one of the sides so that any two vertices in the same part have almost the same neighborhood. We also require all parts, except for a small exceptional set, to have the same size. Finally, for technical reasons it is important that we can specify the number of parts; namely, this number should be the same for all applications with the same parameters, rather than only being upper bounded by a function of these parameters.

    \begin{lemma}\label{lem:homogeneous equipartition}
        Let $\gamma > 0$ and $r \in \mathbb{N}$, and set $q \coloneqq \lceil (1-\gamma)\frac{3r}{\gamma} \rceil$. Let $G$ be a bipartite graph with parts $X,Y$ of size $n$ each, and suppose that $G$ has an $\gamma'$-homogeneous partition $X = X_1 \cup \dots \cup X_s$ and $Y = Y_1 \cup \dots \cup Y_t$, where $s \leq r$ and $\gamma' \coloneqq  \frac{\gamma^3}{48}$. Then there is a partition $X = X'_0 \cup X'_1 \cup \dots X'_q$ such \nolinebreak that
        \begin{enumerate}
            \item $|X'_1| = \dots = |X'_q|$ and $|X'_0| \leq \gamma n$;
            \item for every $i \in [q]$ and $x,x' \in X'_i$, it holds that $|N_Y(x) \triangle N_Y(x')| \leq \gamma n$.
        \end{enumerate}
    \end{lemma}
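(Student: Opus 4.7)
The plan is to use the homogeneous structure of the given partition to group vertices of $X$ by near-identical neighborhoods in $Y$, and then to equalize cluster sizes. First, I would classify each part $X_i$ as \emph{bad} if it is either \emph{tiny}, meaning $|X_i| < \gamma n/(3r)$, or \emph{polluted}, meaning $\sum_{j:(X_i,Y_j)\text{ not }\gamma'\text{-homog.}} |Y_j| > \gamma n/6$. There are at most $r$ tiny parts, contributing total size at most $\gamma n/3$; and the double-counting bound $\sum_{(i,j)\text{ non-homog.}}|X_i||Y_j| \leq \gamma' n^2$ forces the polluted parts to have total size at most $6\gamma' n/\gamma = \gamma^2 n/8$. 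All bad parts are thrown into $X_0'$.

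For each good $X_i$, I would use Markov's inequality to extract its \emph{typical} vertices. Define
\[
f(x) \coloneqq \sum_{\substack{j:(X_i,Y_j)\text{ homog.}\\ d(X_i,Y_j)\leq \gamma'}}\!\!\!|N(x)\cap Y_j| + \sum_{\substack{j:(X_i,Y_j)\text{ homog.}\\ d(X_i,Y_j)\geq 1-\gamma'}}\!\!\!|Y_j\setminus N(x)|.
\]
Summing over $x\in X_i$ gives $\sum_x f(x)\leq\gamma'|X_i|n$, so at most $\sqrt{\gamma'}|X_i|$ vertices satisfy $f(x)>\sqrt{\gamma'}n$; call the remaining set $T_i$ and place the atypical vertices (totalling at most $\sqrt{\gamma'}n$) into $X_0'$. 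For any $x,x'\in T_i$, splitting $N(x)\triangle N(x')$ over the $Y_j$'s gives
\[
|N(x)\triangle N(x')| \leq f(x)+f(x') + \!\!\sum_{j:(X_i,Y_j)\text{ non-homog.}}\!\!|Y_j| \leq 2\sqrt{\gamma'}n + \gamma n/6,
\]
which is at most $\gamma n$ since $\gamma'=\gamma^3/48$.

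Finally, I would equalize by splitting each $T_i$ into sub-parts of a common size $p \coloneqq \lceil(1-\gamma)n/q\rceil$, so that $p \leq \gamma n/(3r)+1$. Each $T_i$ yields $\lfloor |T_i|/p\rfloor \geq |T_i|/p - 1$ such sub-parts, and the leftovers of size $<p$ are added to $X_0'$. Using $\sum_{\text{good }i}|T_i| \geq (1-\sqrt{\gamma'})(1-\gamma/3-\gamma^2/8)n$ and the fact that the $-1$ rounding loss is incurred at most $r$ times, the total number of sub-parts produced is at least $q$ provided $\sqrt{\gamma'} \leq \gamma/3 - \gamma^2/8$, which is precisely what the choice $\gamma' = \gamma^3/48$ is designed to ensure. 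Taking any $q$ of these sub-parts as $X_1',\dots,X_q'$ yields $qp \geq (1-\gamma)n$, hence $|X_0'| = n-qp \leq \gamma n$, as required. The main obstacle is that the exceptional set $X_0'$ must absorb four distinct sources of loss, namely the tiny parts, the polluted parts, the atypical vertices, and the equalization remainders; balancing all four within the single budget $\gamma n$ is exactly what pins down the cubic scaling $\gamma' = \gamma^3/48$.
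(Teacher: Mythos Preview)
Your proposal is correct and follows essentially the same approach as the paper. Both arguments remove a small set of ``bad'' parts via Markov's inequality on the non-homogeneous mass, then within each remaining part remove a few atypical vertices, and finally equalize part sizes by chopping into equal blocks and absorbing remainders into $X_0'$. The only cosmetic differences are that the paper uses a pivot vertex $x_i$ together with the triangle inequality to bound $|N(x)\triangle N(x')|$, whereas you bound it directly via your deviation function $f(x)$; and the paper does not separate out a ``tiny'' category (it is unnecessary, since the equalization remainder is already bounded by $r\cdot p$ regardless of whether some $T_i$ is smaller than $p$, but including it does no harm).
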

    \begin{proof}
        We say that $i \in [s]$ is {\em good} if the sum of $|Y_j|$ over $j \in [t]$ for which $(X_i,Y_j)$ is not $\gamma'$-homogeneous is at most $\frac{\gamma^2}{16} n$. Otherwise $X_i$ is {\em bad}. As the given partition is $\gamma'$-homogeneous, we get from Markov's inequality that 
        $\sum_{X_i \text{ bad}}|X_i| \leq \frac{\gamma' n^2}{\gamma^2 n/16} = \frac{\gamma}{3} n$.
        Place all bad sets $X_i$ into the exceptional set $X'_0$ (which we construct throughout the proof).

        Observe that if $(X_i,Y_j)$ is $\gamma'$-homogeneous, then the number of triples $x,x',y$ such that $x,x' \in X_i$ and $y \in N_Y(x) \triangle N_Y(x')$ is at most $\gamma' |X_i|^2|Y_j|$. Indeed, if e.g.~$d(X_i,Y_j) \geq 1-\gamma'$ (the other case is symmetric), then the number of triples $x,x',y$ containing a non-edge is at most $\gamma' |X_i|^2|Y_j|$.

        Consider a good set $X_i$. Let $\mathcal{T}$ be the set of triples $x,x',y$ such that $x,x' \in X_i$, $y \in Y$ and $y \in N_Y(x) \triangle N_Y(x')$. By the above and the choice of $\gamma'$, we have
        $
        |\mathcal{T}| \leq (\frac{\gamma^2}{16} + \gamma')|X_i|^2 n \leq 
        \frac{\gamma^2}{12}|X_i|^2 n. 
        $
        Hence, there is $x_i \in X_i$ participating in at most 
        $\frac{\gamma^2}{6} |X_i|n$ triples in $\mathcal{T}$. This means that the number of $x \in X_i$ with $|N_Y(x) \triangle N_Y(x_i)| \geq \frac{\gamma}{2}n$ is at most $\frac{\gamma}{3} |X_i|$. Move all of such elements $x$ to $X'_0$, and for every $i$, let $X_i''$ denote the remaining elements of $X_i$. We now have that $|X'_0| \leq \frac{2\gamma}{3}n$. Also, this gives a partition $X''_1,\dots,X''_s$ of $X \setminus X'_0$ such that for every $x \in X''_i$ it holds that 
        $|N_Y(x) \triangle N_Y(x_i)| \leq \frac{\gamma}{2}n$. By the triangle inequality,
        $|N_Y(x) \triangle N_Y(x')| \leq \gamma n$ for all $x,x' \in X''_i$. 

        Now set $m \coloneqq  \frac{\gamma n}{3r}$ and split each $X''_i$ into sets of size $m$ and possibly one leftover set of size less than $m$. Move all leftover sets to $X'_0$. Now $|X'_0| \leq \frac{2\gamma}{3}n + ms \leq \gamma n$, using $s \leq r$. This means that the number of parts of size $m$ disjoint from $X'_0$ is at least $\frac{(1-\gamma)n}{m} = (1-\gamma)\frac{3r}{\gamma}$, and hence at least $q$. 
        Move additional parts to $X'_0$ until the number of parts disjoint from $X'_0$ is exactly $q$. 
        This gives the desired partition $X'_0,X'_1,\dots,X'_q$. Note that $|X'_0| = n-qm \leq n - (1-\gamma)\frac{3r}{\gamma} \cdot m = \gamma n$, as required. 
    \end{proof}

    The next lemma is the key step of the proof, yielding a partition of $A_1 \times \dots \times A_{k-1}$ into $(k-1)$-partite $(k-1)$-graphs, with the property that all $(k-1)$-tuples in the same class have similar neighborhoods in $A_k$. This is, in a sense, a generalization of \cref{lem:homogeneous equipartition} to $k$-graphs, but, as discussed in \cref{sec:overview}, the main new insight is to partition $(k-1)$-tuples of vertices, rather than vertices, in this step of the argument.

	\begin{lemma}\label{lem:AxB partition}
		Let $\varepsilon > 0$, let $H$ be a $k$-partite $k$-graph with parts $A_1,\dots,A_k$ of size $n$ each, and suppose that for every $(k-2)$-tuple of vertices $v_1,\dots,v_{k-2}$ in distinct parts, $L(v_1,\dots,v_{k-2})$ has an $\varepsilon'$-homogeneous partition of size at most $r$, where $\varepsilon' = \frac{1}{48}(\frac{\varepsilon}{6k})^3$.
		Then there is a partition $A_1 \times \nolinebreak \dots \times \nolinebreak A_{k-1} = E_0 \cup E_1 \cup \dots \cup E_t$ with 
        $t \leq ( r/\varepsilon )^{O(1)}$ such that $|E_0| \leq \varepsilon n^{k-1}$ and such that for every $1 \leq i \leq t$ and every $e,e' \in E_i$, it holds that $|N_{A_k}(e) \triangle N_{A_k}(e')| \leq \varepsilon n$. 
	\end{lemma}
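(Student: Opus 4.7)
The first step is to apply \cref{lem:homogeneous equipartition} to every $(k-2)$-link. Set $\gamma \coloneqq \varepsilon/(6k)$, so that $\varepsilon' = \gamma^3/48$ exactly matches the hypothesis of \cref{lem:homogeneous equipartition}. For every $j \in [k-1]$ and every $(k-2)$-tuple $\bar v \in \prod_{i \ne j,k} A_i$, applying the lemma to the bipartite graph $L(\bar v)$ between $A_j$ and $A_k$ yields a partition $A_j = P^{(j)}_0(\bar v) \cup P^{(j)}_1(\bar v) \cup \dots \cup P^{(j)}_q(\bar v)$ with $|P^{(j)}_0(\bar v)| \le \gamma n$, all non-exceptional parts of a common size $m = \Theta(\gamma n / r)$, and $|N_{A_k}(\bar v, x) \triangle N_{A_k}(\bar v, x')| \le \gamma n$ whenever $x, x'$ lie in the same non-exceptional part; here $q = (r/\varepsilon)^{O(1)}$.

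Next, define a valid \emph{chain} $\bar a \to \bar a'$ (with $\bar a, \bar a' \in A_1 \times \dots \times A_{k-1}$) as a sequence of single-coordinate changes respecting these partitions: for each $j \in [k-1]$ one must have $a_j \notin P^{(j)}_0(a'_1,\dots,a'_{j-1},a_{j+1},\dots,a_{k-1})$ and $a'_j$ in the same non-exceptional class as $a_j$ in $P^{(j)}(a'_1,\dots,a'_{j-1},a_{j+1},\dots,a_{k-1})$. Writing the successive tuples $\bar a = \bar a^{(0)}, \bar a^{(1)}, \dots, \bar a^{(k-1)} = \bar a'$, each one-coordinate step contributes $\le \gamma n$ to the symmetric difference of $A_k$-neighborhoods by the guarantee of the lemma, so a valid chain forces $|N_{A_k}(\bar a) \triangle N_{A_k}(\bar a')| \le (k-1)\gamma n$ by the triangle inequality. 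The plan is then to partition $A_1 \times \dots \times A_{k-1}$ by grouping together tuples joined by valid chains to a common center $\bar a^*$.

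The core counting step is to show that for all but an $O(k\gamma)$-fraction of $\bar a$, the chain-reachable set $S(\bar a) \coloneqq \{\bar a' : \bar a \to \bar a' \text{ valid}\}$ has size at least $\tfrac12 m^{k-1}$. Let $M_i$ denote the total number of $i$-step partial chains $(\bar a, a'_1, \dots, a'_i)$ satisfying the chain conditions through step $i$; I aim to prove $M_i \ge (1 - i\gamma)\,m^i\,n^{k-1}$ by induction on $i$. The inductive step takes the form $M_{i+1} = m(M_i - \mathrm{Bad}_i)$, where $\mathrm{Bad}_i$ counts $i$-step valid partial chains whose next step fails because $a_{i+1}$ is exceptional with respect to the partition determined together with the newly-chosen $a'_1,\dots,a'_i$. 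Here lies the main obstacle: a naive union bound gives $\mathrm{Bad}_i \le \gamma n^{k+i-1}$, which is too weak by a factor of $(r/\gamma)^i$ to close the induction. The necessary saving comes from summing out $a_1,\dots,a_i$ first: for each fixed outer tuple $(a'_1,\dots,a'_i,a_{i+1},\dots,a_{k-1})$, the $j$-th step condition restricts $a_j$ to a specific non-exceptional class of size $m$, giving $\sum_{a_1,\dots,a_i}\mathbf{1}[\text{valid}] \le m^i$; summing $a_{i+1}$ afterward against $P^{(i+1)}_0(\cdot)$ of size $\le \gamma n$ then yields $\mathrm{Bad}_i \le m^i \gamma n^{k-1}$, closing the induction. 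Combined with the trivial bound $|S(\bar a)| \le m^{k-1}$, the lower bound $M_{k-1} \ge (1-(k-1)\gamma)\,m^{k-1}\,n^{k-1}$ forces $|S(\bar a)| \ge m^{k-1}/2$ for all but $O(k\gamma) n^{k-1}$ tuples $\bar a$; call these tuples \emph{rich}.

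To finally build the partition, sample $T = (r/\varepsilon)^{O(1)}$ independent uniformly random centers $\bar a^*_1, \dots, \bar a^*_T \in A_1 \times \dots \times A_{k-1}$. For each rich $\bar a$, a uniformly random center lies in $S(\bar a)$ with probability at least $\beta \coloneqq \tfrac12 (m/n)^{k-1} = (\varepsilon/r)^{O(1)}$, so choosing $T = \beta^{-1}\log(3/\varepsilon)$ and applying the first-moment method gives a fixed choice of centers for which all but $\tfrac{\varepsilon}{3} n^{k-1}$ of the rich tuples are covered by some $S(\bar a^*_i)$. Assign each covered $\bar a$ to the first index $i$ with $\bar a^*_i \in S(\bar a)$ to obtain classes $E_1,\dots,E_T$; by the triangle inequality, any two tuples in the same class $E_i$ are both close to the common center $\bar a^*_i$ and hence satisfy $|N_{A_k}(\cdot) \triangle N_{A_k}(\cdot)| \le 2(k-1)\gamma n \le \varepsilon n$. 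Putting all non-rich or uncovered tuples into $E_0$ gives $|E_0| \le O(k\gamma) n^{k-1} + \tfrac{\varepsilon}{3} n^{k-1} \le \varepsilon n^{k-1}$ for our choice of $\gamma$, completing the plan.
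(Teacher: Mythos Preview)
Your proof is correct and follows essentially the same strategy as the paper's: apply \cref{lem:homogeneous equipartition} to every link with $\gamma=\varepsilon/(6k)$, define single-coordinate chains, show that most tuples have a polynomially large chain-neighborhood, and finish by random sampling plus the triangle inequality. The only difference is bookkeeping in the middle step---the paper inducts directly on the number of ``$i$-excellent'' tuples (those reached by many $i$-step chains), whereas you induct on the total count $M_i$ of valid $i$-step partial chains and then recover the per-tuple statement by averaging against the uniform upper bound $|S(\bar a)|\le m^{k-1}$; your summing-out argument for $\mathrm{Bad}_i$ (exploiting that the step-$j$ condition does not involve $a_1,\dots,a_{j-1}$) plays exactly the role of the paper's passage from $\mathcal F$ to $\mathcal F'$.
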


    \begin{proof}
        For each $i \in [k-1]$ and $(k-2)$-tuple $\mathbf{v} \in \prod_{j \in [k-1] \setminus \{i\}}A_j$, apply Lemma \ref{lem:homogeneous equipartition} to the link of $\mathbf{v}$ (which is a bipartite graph between $A_i$ and $A_k$) with parameter $\gamma \coloneqq  \frac{\varepsilon}{6k}$ to obtain a partition 
		$A_i = X_0^{(\mathbf{v})} \cup X_1^{(\mathbf{v})} \cup \dots \cup X_q^{(\mathbf{v})}$ such that
        $q = \lceil(1-\frac{\varepsilon}{6k})\frac{18kr}{\varepsilon}\rceil\leq \frac{18kr}{\varepsilon}$, $|X_0^{(\mathbf{v})}| \leq \gamma n$, $|X_1^{(\mathbf{v})}| = \dots = |X_q^{(\mathbf{v})}| \geq \frac{(1-\gamma)n}{q}$, and for every $1 \leq \ell \leq q$ and $x,x' \in X_\ell^{(\mathbf{v})}$ it holds that
		$|N_{A_k}(\mathbf{v} \cup \{x\}) \triangle N_{A_k}(\mathbf{v} \cup \{x'\})| \leq \gamma n$. 
        Note that due to the statement of Lemma \ref{lem:homogeneous equipartition}, the number of parts $q$ does not depend on $\mathbf{v}$.
        For $i \in [k-1]$, a tuple $e \in A_1 \times \dots \times A_{k-1}$ is called {\em $i$-bad} if, writing $e = \mathbf{v} \cup \{x\}$ for $\mathbf{v} \in \prod_{j \in [k-1] \setminus \{i\}}A_j$ and $x \in A_i$, we have $x \in X_0^{(\mathbf{v})}$. Otherwise $e$ is {\em $i$-good}. Note that there are at most $\gamma n^{k-1}$ $i$-bad tuples.

        Next,
	given two $i$-good $(k-1)$-tuples $e,e' \in A_1 \times \dots \times A_{k-1}$, we call them {\em $i$-twins} if they arise from the same $\mathbf v \in \prod_{j \in [k-1]\setminus \{i\}}A_j$ and the same set $X_\ell^{(\mathbf v)}$. More precisely, $e,e'$ are $i$-twins if, when writing $e = \mathbf v \cup \{x\}, e' = \mathbf v' \cup \{x'\}$, we have that $\mathbf v = \mathbf v'$ and $x,x'$ both lie in the same part $X_\ell^{(\mathbf v)}$, for some $1 \leq \ell \leq q$.
        Note that if $e,e'$ are $i$-twins then $|N_{A_k}(e) \triangle N_{A_k}(e')| \leq \gamma n$.
        Moreover, for each $i$-good tuple $e$, there are at least $\frac{(1-\gamma)n}{q}$ and at most $\frac{n}{q}$ tuples $e'$ such that $e,e'$ are $i$-twins.

        Additionally,
        let us say that two tuples $e, e_1 \in A_1 \times \dots \times A_{k-1}$ are \emph{chain twins} if there exists a sequence $e_1,e_2,\dots,e_k=e$ such that $e_i, e_{i+1}$ are $i$-twins for each $1 \leq i \leq k-1$. Note that the choice of $e_2,\dots,e_{k-1}$ is determined by $e$ and $e_1$, as $e_i$ is obtained from $e_1$ by changing all coordinates $1,\dots,i-1$ to be as in $e$. 
        Next, let us say that a tuple $e \in A_1 \times \dots \times A_{k-1}$ is \emph{excellent} if there are at least $(\frac{\gamma n}{q})^{k-1}$ choices of $e_1 \in A_1 \times \dots \times A_{k-1}$ such that $e$ and $e_1$ are chain twins. Our next claim shows that almost all tuples are excellent.

		\begin{claim}\label{claim:excellent}
        There are at least $(1-\frac \varepsilon 2)n^{k-1}$ excellent tuples $e \in A_1 \times \dots \times A_{k-1}$.
		\end{claim}
		\begin{proof}
        For the proof, we need to extend our definition of excellent tuples. 
        For $0 \leq i \leq k-1$, we say that $e \in A_1 \times \dots \times A_{k-1}$ is {\em $i$-excellent} if there are at least $(\frac{\gamma n}{q})^i$ choices of $e_1 \in A_1 \times \dots \times A_{k-1}$ for which there is a sequence
		$e_1,\dots,e_i,e_{i+1}=e$ where $e_j,e_{j+1}$ are $j$-twins for every $1 \leq j \leq i$. Note that, as above, such a sequence is uniquely determined by $e_1$ and the value of $i$. Moreover, in this terminology, an excellent tuple is the same as a $(k-1)$-excellent tuple. 

        In order to prove the claim, we will show by induction on $i$ that there are at least $(1-3i\gamma)n^{k-1}$ $i$-excellent tuples, for all $0 \leq i \leq k-1$. Note that the $i=k-1$ case suffices to prove the claim, as $1-3\gamma(k-1)\geq 1-\frac \varepsilon 2$. Moreover, the base case $i=0$ of the induction is trivial, so we now turn to the inductive step.
 
        As such, let $1 \leq i \leq k-1$, and let $\mathcal{F}$ be the set of the $(i-1)$-excellent tuples. By the induction hypothesis, 
			$
            |\mathcal{F}| \geq (1-3(i-1)\gamma)n^{k-1}.
            $
            Let $\mathcal{F}'$ be the set of all $e \in \mathcal{F}$ which are $i$-good. As there are in total at most $\gamma n^{k-1}$ $i$-bad tuples, we have that 
			$|\mathcal{F}'| \geq |\mathcal{F}| - \gamma n^{k-1} \geq (1-(3i-2)\gamma)n^{k-1}$. Every $e' \in \mathcal{F}'$ is $i$-good, hence has at least $\frac{(1-\gamma)n}{q}$ $i$-twins. Thus, the number of pairs $(e,e')$ where $e' \in \mathcal{F}'$ and $e,e'$ are $i$-twins is at least 
			$|\mathcal{F}'| \cdot \frac{(1-\gamma)n}{q} \geq (1-(3i-1)\gamma) \cdot \frac{n^k}{q}$.
            Let $\mathcal{G}$ be the set of all $e \in A_1 \times \dots \times A_{k-1}$ such that $e$ has at least $\gamma\frac{n}{q}$ $i$-twins $e' \in \mathcal{F}'$. 
            Using that each $e \in A_1 \times \dots \times A_{k-1}$ has at most $\frac{n}{q}$ $i$-twins, we get 
            $$
			|\mathcal{G}| \geq
			\frac{(1-(3i-1)\gamma) \cdot \frac{n^k}{q} - \frac{\gamma n^k}{q}}{\frac{n}{q}} = (1-3i\gamma)n^{k-1}.
			$$ 
            To complete the proof of the claim, we show that every $e \in \mathcal{G}$ is $i$-excellent. Indeed, fix any $e \in \mathcal{G}$. Fix any $e' \in \mathcal{F}'$ such that $e,e'$ are $i$-twins; there are at least $\frac{\gamma n}{q}$ choices for $e'$. As $e'$ is $(i-1)$-excellent, there are at least 
            $(\frac{\gamma n}{q})^{i-1}$ choices for $e_1,\dots,e_{i-1},e_i=e' \in A_1 \times \dots \times A_{k-1}$ such that $e_j,e_{j+1}$ are $j$-twins for every $1 \leq j \leq i-1$. Setting $e_{i+1} \coloneqq  e$, we get that $e_j,e_{j+1}$ are $j$-twins for every $1 \leq j \leq i$. The total number of choices for $e_1$ is at least $(\frac{\gamma n}{q})^i$, hence $e$ is indeed $i$-excellent. This completes the induction, and hence proves the claim by taking $i=k-1$.
		\end{proof}
        Now let $e$ be an excellent tuple, let $e_1$ be a chain twin of $e$, and let $e_2,\dots,e_{k-1}$ be such that,
        setting $e_k \coloneqq  e$, we have that $e_i,e_{i+1}$ are $i$-twins for every $1 \leq i \leq k-1$. By the triangle inequality,
		$$
        |N_{A_k}(e) \triangle N_{A_k}(e_1)| \leq \sum_{i=1}^{k-1} |N_{A_k}(e_{i+1}) \triangle N_{A_k}(e_i)| \leq (k-1)\gamma n \leq \frac{\varepsilon}{2}n.
        $$
        As $e$ is excellent, there are at least $(\frac{\gamma n}{q})^{k-1}$ choices for $e_1$ in the computation above. Therefore, there are at least
        $(\frac{\gamma n}{q})^{k-1}$ tuples $e' \in A_1 \times \dots \times A_{k-1}$ such that $|N_{A_k}(e) \triangle N_{A_k}(e')| \leq \frac{\varepsilon}{2}n$. 
		
		Now sample tuples $f_1,\dots,f_t \in A_1 \times \dots \times A_{k-1}$ uniformly at random and independently, where 
        $$
        t \coloneqq  \left( \frac{q}{\gamma} \right)^{k-1}\log\left(\frac{2}{\varepsilon} \right) = \left( \frac{r}{\varepsilon} \right)^{O(1)}.
        $$
        For $i \in [t]$, let $E_i$ be the set of $e \in A_1 \times \dots \times A_{k-1}$ such that $|N_{A_k}(e) \triangle N_{A_k}(f_i)| \leq \frac{\varepsilon}{2}n$ (if this holds for multiple $i$, we break ties arbitrarily). Then by the triangle inequality, $|N_{A_k}(e) \triangle N_{A_k}(e')| \leq \varepsilon n$ for every $e,e' \in E_i$. 

        Moreover, for any fixed $e \in A_1 \times \dots \times A_{k-1}$, we have that $e \in E_1 \cup \dots \cup E_t$ if some $f_i$ is a chain twin of $e$. In particular, if $e$ is excellent, then
		$$
        \mathbb{P}[e \notin E_1 \cup \dots \cup E_t] \leq \left( 1-\left(\frac\gamma q\right)^{k-1} \right)^t \leq e^{-(\frac{\gamma}{q})^{k-1} t} \leq \frac{\varepsilon}{2}.
        $$
        Moreover, by \cref{claim:excellent}, the number of excellent tuples is at least $(1-\frac \varepsilon2)n^{k-1}$. As a consequence, by linearity of expectation, we have that
        \[
        \mathbb E[|E_1 \cup \dots \cup E_t|] \geq \sum_{e\text{ excellent}} \mathbb P[e \in E_1 \cup \dots \cup E_t] \geq \sum_{e\text{ excellent}} \left(1-\frac \varepsilon 2\right) \geq \left(1-\frac \varepsilon 2\right)^2 n^{k-1} \geq (1-\varepsilon) n^{k-1}.
        \]
        Hence, there is an outcome with 
		$|E_1 \cup \dots \cup E_t| \geq (1-\varepsilon)n^{k-1}$, and setting 
		$E_0 \coloneqq  (A_1 \times \dots \times A_{k-1}) \setminus (E_1 \cup \dots \cup E_t)$ completes the proof of Lemma \ref{lem:AxB partition}. 
	\end{proof}
    
    The following lemma is a sort of converse to \cref{lem:homogeneous equipartition}, stating that if a partition is not homogeneous, then there are many ``witnesses'' which witness this by having a large symmetric difference of their neighborhoods. It is an easy consequence of \cite[Lemma 2.3]{FPS}, but we include a proof for completeness. 
	\begin{lemma}\label{lem:non-homogeneous partition}
		Let $H$ be a $k$-partite $k$-graph with parts $X_1,\dots,X_k$ of size $n$ each. For $i \in [k]$, let $\mathcal{P}_i$ be an equipartition of $X_i$ into $s$ parts, and suppose that $(\mathcal{P}_1,\dots,\mathcal{P}_k)$ is not $\varepsilon$-homogeneous. Then there are at least 
        $\varepsilon^2(1-\varepsilon)\frac{n^{k+1}}{s}$ pairs $e,e' \in X_1 \times \dots \times X_k$ such that $|e \cap e'| = k-1$, $e \in E(H), e' \notin E(H)$, and $e \triangle e'$ is contained in some part of $\mathcal{P}_i$ for some $i \in [k]$.
	\end{lemma}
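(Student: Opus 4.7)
The plan is to reduce the problem to a per-block count. Call a $k$-tuple $(Y_1,\dots,Y_k) \in \P_1 \times \dots \times \P_k$ a \emph{block}; since each $\P_i$ is an equipartition into $s$ parts, every block has measure $M \coloneqq |Y_1|\cdots|Y_k|=(n/s)^k$. The non-homogeneity hypothesis then says that more than $\varepsilon s^k$ blocks have density $\rho \coloneqq d_H(Y_1,\dots,Y_k) \in (\varepsilon,1-\varepsilon)$. I will argue below that each such block contributes at least $\rho(1-\rho)(n/s)^{k+1}$ pairs of the desired form; summing over all non-homogeneous blocks then yields
\[
	\varepsilon s^k \cdot \varepsilon(1-\varepsilon)(n/s)^{k+1} \;=\; \varepsilon^2(1-\varepsilon)\frac{n^{k+1}}{s}
\]
pairs in total, exactly as required.

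The per-block argument is a coordinate-switching path count. Fix a block $(Y_1,\dots,Y_k)$ of density $\rho$. For each ordered pair $(e,e') \in (Y_1 \times \dots \times Y_k)^2$, walk from $e$ to $e'$ by flipping coordinates in the order $1,2,\dots,k$: define $e^{(j)}$ to have its first $j$ coordinates from $e'$ and its last $k-j$ coordinates from $e$, so that $e^{(0)} = e$ and $e^{(k)} = e'$. Consecutive members $e^{(j-1)}, e^{(j)}$ differ only in coordinate $j$, and both values of that coordinate lie in $Y_j$. Let $T^+(e,e')$ count the indices $j$ for which $e^{(j-1)} \in E(H)$ and $e^{(j)} \notin E(H)$. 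Whenever $e \in E(H)$ and $e' \notin E(H)$, the edge/non-edge state flips an odd number of times along the path, so in particular $T^+(e,e') \ge 1$.

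To turn this into the claimed count, let $N_j$ be the number of ordered pairs $(f,f')$ of the lemma's type (so $f \in E(H)$, $f' \notin E(H)$, $|f \cap f'| = k-1$) with $f \triangle f' \subseteq Y_j$. A direct count shows that each such $(f,f')$ arises as $(e^{(j-1)}, e^{(j)})$ for exactly $M/|Y_j| = (n/s)^{k-1}$ ordered pairs $(e,e')$: the $k-1$ coordinates $e_1,\dots,e_{j-1}$ and $e'_{j+1},\dots,e'_k$ range freely through their parts, while the remaining coordinates of $e$ and $e'$ are forced by $(f,f')$. Double-counting together with the parity bound on $T^+$ then gives
\[
	(n/s)^{k-1} \sum_{j=1}^k N_j \;=\; \sum_{(e,e') \in (Y_1 \times \dots \times Y_k)^2} T^+(e,e') \;\ge\; \rho M \cdot (1-\rho) M,
\]
and rearranging yields $\sum_j N_j \ge \rho(1-\rho)(n/s)^{k+1}$, as claimed. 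I do not expect a real obstacle in executing this plan; the only subtle point is to keep the orientation straight, by using the \emph{oriented} transition count $T^+$ rather than the total number of edge/non-edge flips, so that the counted pairs really have $f \in E(H)$ and $f' \notin E(H)$ in the correct direction, which is exactly what the parity observation provides.
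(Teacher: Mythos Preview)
Your proof is correct and follows essentially the same approach as the paper: both arguments fix a non-homogeneous block $(Y_1,\dots,Y_k)$ and walk from one random tuple to another by switching coordinates one at a time, using that an edge/non-edge discrepancy at the endpoints forces a discrepancy at some step. The only cosmetic difference is that the paper phrases the per-block count probabilistically (sampling $u_i,v_i$ uniformly and applying a union bound) while you phrase it as an explicit double count via $T^+$; the resulting bound $\sum_j N_j \ge \varepsilon(1-\varepsilon)(n/s)^{k+1}$ is identical.
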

	\begin{proof}
        Fix a $k$-tuple $(Y_1,\dots,Y_k) \in \mathcal{P}_1 \times \dots \times \mathcal{P}_k$ with 
		$\varepsilon \leq d(Y_1,\dots,Y_k) \leq 1-\varepsilon$. We claim that there are at least $\varepsilon(1-\varepsilon)(\frac{n}{s})^{k+1}$ pairs $e,e' \in Y_1 \times \dots \times Y_k$ with $|e \cap e'| = k-1$ and $e \in E(H), e' \notin E(H)$. This suffices because the number of non-$\varepsilon$-homogeneous $k$-tuples $(Y_1,\dots,Y_k)$ is at least $\varepsilon s^k$.

		To prove the above claim, sample vertices $u_i,v_i \in Y_i$ uniformly at random and independently, for each $i \in [k]$. For $0 \leq i \leq k$, let $e_i \coloneqq  (v_1,\dots,v_i,u_{i+1},\dots,u_k)$. So $e_0 = (u_1,\dots,u_k)$ and $e_k = (v_1,\dots,v_k)$. 
        Let us say that two $k$-tuples $e,e'$ {\em disagree} if one of them is in $E(H)$ while the other is not. As the random tuples $e_0,e_k$ are independent of one another,
        the probability that $e_0,e_k$ disagree is at least $2\varepsilon(1-\varepsilon)$, because $\varepsilon \leq d(Y_1,\dots,Y_k) \leq 1-\varepsilon$. If this happens, then there is $1 \leq i \leq k$ such $e_{i-1},e_i$ disagree. Note that $e_{i-1},e_i$ differ only in the $i$th coordinate. Let $t_i$ be the number of pairs $e,e' \in Y_1 \times \dots \times Y_k$ such that $e,e'$ differ only in the $i$th coordinate and $e \in E(H), e' \notin E(H)$. The probability that there exists $1 \leq i \leq k$ such that $e_{i-1},e_i$ disagree is at most 
        $$
        \sum_{i=1}^k \frac{2t_i}{|Y_1|\dotsb|Y_k|\cdot |Y_i|} = \sum_{i=1}^k \frac{2t_i}{(n/s)^{k+1}} \; .
        $$
        On the other hand, as explained above, this probability is at least $2\varepsilon(1-\varepsilon)$. It follows that $\sum_{i=1}^k t_i \geq \varepsilon(1-\varepsilon)(\frac{n}{s})^{k+1}$, proving our claim.
	\end{proof}
	\noindent We now have all the tools needed to prove \cref{thm:upper bound}.
	\begin{proof}[Proof of Theorem \ref{thm:upper bound}]
		We apply Lemma \ref{lem:AxB partition} $k$ times with parameter $\frac{\varepsilon^2}{8k}$, each time with a different set $X_i$ ($1 \leq i \leq k$) playing the role of $A_k$ (with the other $k-1$ sets $X_j$ playing the roles of $A_1,\dots,A_{k-1}$). This gives, 
		for every $i \in [k]$, a partition 
		$\prod_{j \in [k] \setminus \{i\}} X_j = E_0^{(i)} \cup E_1^{(i)} \cup \dots \cup E_t^{(i)}$ such that $|E_0^{(i)}| \leq \frac{\varepsilon^2}{8k} n^{k-1}$, and such that for every $1 \leq j \leq t$ and $e,e' \in E_j^{(i)}$ it holds that 
		$|N_{X_i}(e) \triangle N_{X_i}(e')| \leq \frac{\varepsilon^2}{8k} n$. Also,
		$t \leq (r/\varepsilon)^{O(1)}$. 
		
		For $i \in [k]$, we define a partition $\mathcal{P}_i$ of $X_i$ as follows. For each $j \in [t]$, fix $e^{(i)}_j \in E_j^{(i)}$ and put $X_j^{(i)} \coloneqq N_{X_i}(e_j^{(i)})$. Let $\mathcal{P}'_i$ be the common refinement of the sets $X_1^{(i)},\dots,X_t^{(i)}$. 
		Note that $|\mathcal{P}'_i| \leq 2^t \eqqcolon p$.
		Next, we refine $\mathcal{P}'_i$ further to make the parts have the same size. To this end, partition each part of $\mathcal{P}'_i$ into parts of size $m \coloneqq \frac{\varepsilon^2 n}{8kp}$ and (possibly) an additional leftover part of size less than $m$. Let $X_0^{(i)}$ be the union of the leftover parts; so $|X_0^{(i)}| \leq \frac{\varepsilon^2}{8k} n$. Partition\footnote{Here we use our assumption that $n$ is divisible by quantities determined by the other parameters, and in particular is divisible by $m$.} $X_0^{(i)}$ into parts of size $m$. 
        The resulting partition is $\mathcal{P}_i$. Note that $\mathcal{P}_i$ is an equipartition of size $s \coloneqq \frac{n}{m} = \frac{8kp}{\varepsilon^2} \leq 2^{(r/\varepsilon)^{O(1)}}$. 
		
		We claim that $(\mathcal{P}_1,\dots,\mathcal{P}_k)$ is an $\varepsilon$-homogeneous partition of $H$. For the sake of contradiction, suppose otherwise. 
		For $i \in [k]$, let $T_i$ be the number of pairs of $k$-tuples $e,e' \in X_1 \times \dots \times X_k$ such that 
		there is 
		$(X'_1,\dots,X'_k) \in \mathcal{P}_1 \times \dots \times \mathcal{P}_k$ with $e,e' \in X'_1 \times \dots \times X'_k$, 
		$|e \cap e'| = k-1$, $e \triangle e' \subseteq X'_i$, $e \in E(H)$ and $e' \notin E(H)$. 
		As we assumed that $(\mathcal{P}_1,\dots,\mathcal{P}_k)$ is not $\varepsilon$-homogeneous, Lemma \ref{lem:non-homogeneous partition} gives 
		$T_1 + \dots + T_k \geq \varepsilon^2(1-\varepsilon)\frac{n^{k+1}}{s}$. To get a contradiction, we now upper-bound each $T_i$. By symmetry, it suffices to consider $T_k$. Thus, we consider pairs $e,e'$ with 
		$e,e' \in X'_1 \times \dots \times X'_k$ for some 
		$(X'_1,\dots,X'_k) \in \mathcal{P}_1 \times \dots \times \mathcal{P}_k$, 
		$|e \cap e'| = k-1$, $e \triangle e' \subseteq X'_k$, $e \in E(H)$ and $e' \notin E(H)$. 
		For $i \in [k-1]$, let $x_i$ be the vertex of $e,e'$ in $X_i$. 
		The number of choices for $e,e'$ where $(x_1,\dots,x_{k-1}) \in E_0^{(k)}$ is at most $|E_0^{(k)}| \cdot s \cdot (\frac{n}{s})^2 \leq \frac{\varepsilon^2}{8k} \cdot \frac{n^{k+1}}{s}$. 
		Let now $j \in [t]$, and let us consider the case that $(x_1,\dots,x_{k-1}) \in E_j^{(k)}$. Recall that $|N_{X_k}(x_1,\dots,x_{k-1}) \triangle X_j^{(k)}| \leq \frac{\varepsilon^2}{8k}n$, where $X_j^{(k)} = N_{X_k}(e^{(k)}_j)$ as above. This holds because $(x_1,\dots,x_{k-1})$ and $e^{(k)}_j$ both belong to $E_j^{(k)}$, and as $|N_{X_k}(f) \triangle N_{X_k}(f')| \leq \frac{\varepsilon^2}{8k}n$ for any two $f,f' \in E_j^{(k)}$.
		Also, by the definition of 
		$\mathcal{P}_k$, each part of $\mathcal{P}_k$ is contained either in $X_0^{(k)}$, or in $X_j^{(k)}$, or in $X_k \setminus X_j^{(k)}$ (because $\mathcal{P}'_k$ is the common refinement of $X_1^{(k)},\dots,X_t^{(k)}$). We handle these cases separately.
		First, as $|X_0^{(k)}| \leq \frac{\varepsilon^2}{8k} n$, the number of choices for $e,e'$ where $e \triangle e' \subseteq X_0^{(k)}$ is at most 
		$n^{k-1} \cdot \frac{\varepsilon^2}{8k} n \cdot \frac{n}{s} = \frac{\varepsilon^2}{8k} \cdot \frac{n^{k+1}}{s}$. Now consider the case that $e \triangle e'$ is contained in $X_j^{(k)}$ or in $X_k \setminus X_j^{(k)}$. As $|N_{X_k}(x_1,\dots,x_{k-1}) \triangle X_j^{(k)}| \leq \frac{\varepsilon^2}{8k}n$, the number of vertices $y' \in X_j^{(k)}$ with $(x_1,\dots,x_{k-1},y') \notin E(H)$ is at most $\frac{\varepsilon^2}{8k} n$. Hence, the number of choices for $e,e'$ where $e \triangle e' \subseteq X_j^{(k)}$ is at most 
		$|E_j^{(k)}| \cdot \frac{\varepsilon^2}{8k} n \cdot \frac{n}{s} = 
		|E_j^{(k)}| \cdot \frac{\varepsilon^2}{8k} \cdot \frac{n^2}{s}$. Similarly, the number of vertices $y \in X_k \setminus X_j^{(k)}$ with $(x_1,\dots,x_{k-1},y) \in E(H)$ is at most $\frac{\varepsilon^2}{8k} n$. Hence, the number of choices for $e,e'$ where $e \triangle e' \subseteq X_k \setminus X_j^{(k)}$ is at most $|E_j^{(k)}| \cdot \frac{\varepsilon^2}{8k} \cdot \frac{n^2}{s}$. Collecting all terms, we get that
		$$
		T_k \leq 2 \cdot \frac{\varepsilon^2}{8k} \cdot \frac{n^{k+1}}{s} + 2 \cdot \frac{\varepsilon^2}{8k} \cdot \frac{n^2}{s}\sum_{j=1}^t |E_j^{(k)}| \leq 
		\frac{\varepsilon^2}{2k} \cdot \frac{n^{k+1}}{s}.
		$$ 
		By symmetry, $T_1,\dots,T_{k-1} \leq \frac{\varepsilon^2}{2k} \cdot \frac{n^{k+1}}{s}$ as well, so 
		$T_1 + \dots + T_k < \varepsilon^2(1-\varepsilon)\frac{n^{k+1}}{s}$, a contradiction. 
	\end{proof}

    \section{Proof of Theorem \ref{thm:lower bound}}\label{sec:lower}

    Throughout this section, we fix $0 < \delta \leq \varepsilon$ with $\varepsilon$ smaller than some implicit absolute constant, and also assume wherever needed that $n \geq n_0(\delta)$ is large enough.
    We will use the following lemma, which essentially appears in \cite{Gowers}.

    \begin{lemma}\label{lem:orthogonal partitions}
    Let $m,M$ be integers with $M \leq \max(e^{m/16},2)$. Then there are partitions $(X_i,Y_i)_{i=1,\dots,m}$ of $[M]$ having the following properties:
    \begin{enumerate}
        \item If $M \geq \log^3(8m^2)$, then $|X_i|,|Y_i| = \frac{M}{2} \pm M^{2/3}$ for every $1 \leq i \leq m$, and $|X_i \cap X_{i'}|, |X_i \cap Y_{i'}|, |Y_i \cap Y_{i'}| = \frac{M}{4} \pm M^{2/3}$ for every pair $1 \leq i \neq i' \leq m$.
        \item 
        Let $\zeta \leq 1/2$ and $\eta,\varepsilon > 0$ such that $(1-\eta)(1-4\varepsilon) \geq 1-\zeta + \zeta^2$. 
        Let $\lambda_1,\dots,\lambda_M \in \mathbb{R}_{\geq 0}$ with $\sum_{i=1}^M \lambda_i = 1$, and suppose that $\lambda_i \leq 1-\zeta$ for every $i \in [M]$. Then there are at least $\eta  m$ indices $i \in [m]$ satisfying $\min\left( \sum_{j \in X_i}\lambda_j, \sum_{j \in Y_i} \lambda_j \right) > \varepsilon$.  
    \end{enumerate}
    \end{lemma}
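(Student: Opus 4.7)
The plan is to choose the partitions uniformly at random and independently: for each $i\in [m]$ and each $j \in [M]$, independently place $j$ in $X_i$ with probability $1/2$ and in $Y_i$ otherwise. I will then show both properties hold simultaneously with positive probability, which establishes their existence.

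For Part~1, each $|X_i|$ is $\Bin(M, 1/2)$, while each of $|X_i \cap X_{i'}|$, $|X_i \cap Y_{i'}|$, $|Y_i\cap Y_{i'}|$ is $\Bin(M,1/4)$. By Chernoff, each deviates from its mean by more than $M^{2/3}$ with probability at most $2\exp(-cM^{1/3})$, so a union bound over the $O(m^2)$ events, combined with the hypothesis $M \geq \log^3(4m^2)$, ensures Part~1 holds with probability strictly greater than $1/2$.

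The core of the argument is Part~2, which I would approach via a second-moment computation. Fix any valid $\lambda$ and any single index $i$, and let $S = \sum_{j \in X_i}\lambda_j$. Because the coordinates of the random partition are independent, direct computation gives $\mathbb{E}[S] = 1/2$ and
\[
\mathbb{E}[S(1-S)] = \frac{1}{4}\Bigl(1 - \sum_j \lambda_j^2\Bigr).
\]
Maximizing the convex function $\sum_j \lambda_j^2$ over the simplex under the box constraint $\max_j \lambda_j \leq 1-\zeta$ is achieved at an extreme point supported on two coordinates with values $1-\zeta$ and $\zeta$, yielding $\sum_j \lambda_j^2 \leq 1-2\zeta+2\zeta^2$. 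Combined with the trivial bounds $S(1-S) \leq 1/4$ always and $S(1-S) \leq \varepsilon$ whenever $\min(S,1-S) \leq \varepsilon$, this gives
\[
\Pr\bigl[\min(S,1-S) > \varepsilon\bigr] \;\geq\; \frac{2(\zeta-\zeta^2) - 4\varepsilon}{1-4\varepsilon} \;\geq\; \eta + \tau
\]
for some slack $\tau > 0$, where the last inequality uses the hypothesis $(1-\eta)(1-4\varepsilon) \geq 1-\zeta+\zeta^2$ together with $\zeta - \zeta^2 > 0$ for $\zeta \in (0,1/2]$. Since the $m$ partitions are independent, a Chernoff bound shows that for every fixed $\lambda$, at least $\eta m$ indices are good with probability at least $1 - \exp(-\Omega(\tau^2 m))$.

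The main obstacle is upgrading this single-$\lambda$ bound to a uniform bound over all admissible $\lambda$ simultaneously. A naive $\delta$-net on the simplex has size $(C/\delta)^M$, which is too large compared with $\exp(\Omega(m))$ when $M$ is near the upper bound $e^{m/16}$. My plan is to observe that coordinates with $\lambda_j < \varepsilon/m$ contribute only a negligible amount to any partial sum $\sum_{j \in X_i}\lambda_j$ and can be truncated after a small adjustment of the parameters; the truncated vector has effective support of size $O(m/\varepsilon)$. An enumeration over the resulting ``heavy-support'' types, combined with a fine net on the low-dimensional simplex attached to each support, should then produce a union bound compatible with the $M \leq e^{m/16}$ regime. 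Balancing the net resolution against the Chernoff tail in this final step is where most of the technical care will be required.
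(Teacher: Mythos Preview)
Your treatment of Part~1 is fine and matches the paper. The difficulty is entirely in Part~2, and your union-bound approach has a genuine quantitative gap that I do not see how to close.

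First, note that Item~2 is stated for \emph{all} admissible $\zeta,\eta,\varepsilon$ and all $\lambda$; the partitions are fixed once and for all, and then the conclusion must hold universally. So your net would have to be uniform in the parameters as well, which is already awkward. But even fixing the parameters, the numbers do not work. After truncating to the heavy support you have at most $s=O(m/\varepsilon)$ live coordinates, but you must still enumerate \emph{which} $s$ of the $M$ coordinates are heavy, contributing a factor of $\binom{M}{s}\le (eM/s)^{s}$. With $M$ as large as $e^{m/16}$ this is $\exp(\Theta(m^{2}/\varepsilon))$, which swamps your Chernoff tail $\exp(-\Omega(\tau^{2}m))$; the exponent on one side is linear in $m$ and on the other quadratic. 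Tightening the net resolution or the truncation threshold does not help, since the support enumeration alone already kills you.

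The paper sidesteps the whole issue by not doing a union bound over $\lambda$ at all. Instead it isolates a single deterministic event depending only on the partitions: for every pair $j\ne j'\in[M]$, the number $z_{j,j'}$ of indices $i$ for which $j,j'$ land on the same side is at most $3m/4$. Each $z_{j,j'}$ is $\Bin(m,\tfrac12)$, so a union bound over the $\binom{M}{2}\le e^{m/8}$ pairs (this is where $M\le e^{m/16}$ is used) shows this event holds with probability~$>\tfrac12$. The point is that once this pairwise condition holds, one can bound $\sum_{i=1}^{m}\bigl(\sum_{j\in X_i}\lambda_j-\sum_{j\in Y_i}\lambda_j\bigr)^{2}$ deterministically: expanding the square, the cross terms are controlled by the $z_{j,j'}$, and after the same convexity bound $\sum_j\lambda_j^{2}\le 1-2\zeta+2\zeta^{2}$ that you used, one gets the sum is at most $m(1-\zeta+\zeta^{2})\le m(1-\eta)(1-4\varepsilon)$. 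A Markov-type count then gives at least $\eta m$ good indices, for \emph{every} $\lambda$ and every choice of parameters simultaneously. In effect, your second-moment computation is the right ingredient, but it should be run as a sum over $i$ for a fixed partition (controlled via the pairwise statistics $z_{j,j'}$), not as an expectation over the random partition for a fixed $\lambda$.
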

    
    Item 2 of Lemma \ref{lem:orthogonal partitions} follows immediately by combining Lemmas 3 and 5 in \cite{Gowers}. The desired partitions $(X_i,Y_i)$ are chosen uniformly at random, and Item 1 is a simple property of random partitions. For completeness, we include the proof of Lemma \ref{lem:orthogonal partitions} in \cref{sec:appendix}. 

    We now proceed with the proof of Theorem \ref{thm:lower bound}. We describe the construction used to establish the theorem in the following Section \ref{sec:construction}, and then analyze it in Section \ref{sec:analysis}. 
    As in \cite{Gowers}, it is convenient to describe the construction as a weighted 3-graph. We will then transform this into an unweighted (``normal") 3-graph via sampling. 

    \subsection{The construction}\label{sec:construction}
    We construct a weighted 3-partite 3-graph $H$ with parts $A,B,C$, each of size $n$. We set 
    $t \coloneqq  \lfloor \frac{1}{4}\log_7(\frac{1}{\varepsilon}) - 3 \rfloor$ and $s_0=\lceil 4/\delta^4 \rceil$. Partition $C$ into $t$ equal parts $C_1,\dots,C_t$. 

    For an integer $m$, let 
    $\phi(m) \coloneqq  \max\left( \lfloor e^{m/16} \rfloor ,2 \right)$, and note that the conclusion of Lemma \ref{lem:orthogonal partitions} holds for each $M \leq \phi(m)$. 
    We now inductively define a sequence of integers $m_0,m_1,\dots,m_t$ as follows. Set $m_0 = 1$.
    For each $1 \leq r \leq t$, if $m_{r-1} < s_0$ and $\phi(m_{r-1}) \geq s_0$ then set $m_r \coloneqq  m_{r-1} \cdot s_0$. Otherwise, set $m_r \coloneqq  m_{r-1} \cdot \phi(m_{r-1})$. Note that $m_{r-1}$ divides $m_r$ and $m_r > m_{r-1}$ (using that $\phi(m_{r-1}), s_0 \geq 2$). Also, $\frac{m_r}{m_{r-1}} \leq \phi(m_{r-1})$. To explain this somewhat unusual choice of a sequence, let us remark that we would generally like $m_r=m_{r-1}\cdot \phi(m_{r-1})$, as this enables us to apply \cref{lem:orthogonal partitions} while also ensuring that $m_r$ is exponentially larger than $m_{r-1}$, so as to obtain a tower-type bound in the end. However, if we have the sequence grow exponentially at every step, we may accidentally end up with a vertex whose link only has a $\delta$-regular partition of size exponential in $\delta$. In order to ensure that this doesn't happen, we make sure to stunt the growth of the sequence at the critical step (around the value $s_0$), thus ensuring a polynomial-sized $\delta$-regular partition in all cases. See the proof of \cref{lem:regular partitions in links} for the precise details.

    Next, let us identify each of $A,B$ with $[n]$ in order to have an order on these sets. For $0 \leq r \leq t$, let $\mathcal{A}_r$ be the partition of $A$ into $m_r$ equal-sized intervals (with respect to the vertex order), and similarly let $\mathcal{B}_r$ be the partition of $B$ into $m_r$ equal-sized intervals. For each $1 \leq r \leq t$, $\mathcal{A}_r$ refines $\mathcal{A}_{r-1}$ because $m_{r-1}$ divides $m_r$, and similarly for $\mathcal{B}_r$ and $\mathcal{B}_{r-1}$. 

    Now we define 3-partite 3-graphs $H_1,\dots,H_t \subseteq A \times B \times C$. Fix $1 \leq r \leq t$ and let us write $\mathcal{A}_{r-1} = \{A_1,\dots,A_m\}$ and $\mathcal{B}_{r-1} = \{B_1,\dots,B_m\}$, where $m \coloneqq  m_{r-1}$. Put also $M \coloneqq  \frac{m_r}{m_{r-1}} \leq \phi(m_{r-1}) = \phi(m)$. For each $i \in [m]$, let $A_{i,1},\dots,A_{i,M}$ (resp.\ $B_{i,1},\dots,B_{i,M}$) be the parts of $\mathcal{A}_r$ (resp.\ $\mathcal{B}_r$) contained in $A_i$ (resp.\ $B_i$). Let $(X_i,Y_i)_{i=1}^m$ be the partitions of $[M]$ given by Lemma \ref{lem:orthogonal partitions} (it is possible to invoke Lemma \ref{lem:orthogonal partitions} because $M \leq \phi(m)$). 
    Define a bipartite graph $G_r \subseteq A \times B$ as follows: 
    For each pair $1 \leq i,j \leq m$, put into $G_r$ all edges in 
    $\left( \bigcup_{k \in X_j} A_{i,k} \right) \times 
    \left( \bigcup_{k \in X_i} B_{j,k} \right)$ and 
    $\left( \bigcup_{k \in Y_j} A_{i,k} \right) \times 
    \left( \bigcup_{k \in Y_i} B_{j,k} \right)$. Now let $H_r = \{ e \cup \{v\} : e \in E(G_r), v \in C_r\} = G_r \times C_r$. 
    This completes the definition of $H_r$.
    Finally, let $H$ be the weighted 3-graph 
    $H \coloneqq \sum_{r=1}^t 2^{-r} H_r$. Note that, as $\sum_{r=1}^t 2^{-r}\leq 1$, every edge of $H$ has weight in $[0,1]$.
    
    For most of the proof, we shall work with the weighted 3-graph $H$. Consequently, we need to introduce the weighted analogues of the basic notions of link, density, and regularity. For $v \in A$, the link $L_H(v)$ is defined as the weighted graph on $B \times C$ where $e \in B \times C$ has weight $H(e \cup \{v\})$. The link of a vertex in $B$ or $C$ is defined analogously. For $A' \subseteq A, B' \subseteq B, C' \subseteq C$, the density of $(A',B',C')$ is defined as $d_H(A',B',C') = \frac{1}{|A'||B'||C'|}\sum_{e \in A' \times B' \times C'} H(e)$. Weak regularity for weighted 3-graphs is defined analogously to the unweighted case. 

    \subsection{The analysis}\label{sec:analysis}
    Throughout this section, we use the same notation as in Section \ref{sec:construction}. We begin with the following lemma, showing that if $m_{r-1}$ is large enough, then the graph $G_r$ is $\delta$-regular.

    \begin{lemma}\label{lem:large partitions are quasirandom}
        Let $1 \leq r \leq t$ such that $m_{r-1} \geq s_0$. Then the graph $G_r$ is $\delta$-regular. 
    \end{lemma}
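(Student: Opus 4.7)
The plan is to estimate $e_{G_r}(U,V)$ for arbitrary $U\subseteq A$ and $V\subseteq B$ with $|U|,|V|\geq \delta n$, and show that it differs from $\tfrac12|U||V|$ by at most $\delta |U||V|$, which gives $\delta$-regularity. Set $m = m_{r-1}$ and $M = m_r/m_{r-1} = \phi(m)$. For each $j \in [m]$, encode the partition $(X_j, Y_j)$ of $[M]$ by the sign vector $\sigma^j \in \{-1, +1\}^M$ where $\sigma^j_k = +1$ iff $k \in X_j$. Let $u_{i,k} = |U \cap A_{i,k}|$ and $v_{j,\ell} = |V \cap B_{j,\ell}|$. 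By the definition of $G_r$ together with a direct case check,
\[
\mathbf{1}\bigl[(k\in X_j \wedge \ell\in X_i)\vee(k\in Y_j\wedge \ell\in Y_i)\bigr] \;=\; \tfrac{1+\sigma^j_k\sigma^i_\ell}{2}.
\]
Summing $u_{i,k}v_{j,\ell}$ over all $i,j,k,\ell$ against this indicator yields
\[
e_{G_r}(U,V) \;=\; \tfrac12|U||V| \;+\; \tfrac12 \sum_{i,j,k,\ell} u_{i,k}v_{j,\ell}\sigma^j_k\sigma^i_\ell.
\]

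The second step is the key factorization: the quadruple sum rearranges as $\sum_{k,\ell}F(k,\ell)G(k,\ell)$, where $F(k,\ell)=\sum_i u_{i,k}\sigma^i_\ell$ and $G(k,\ell)=\sum_j v_{j,\ell}\sigma^j_k$. By Cauchy--Schwarz this is bounded by $\sqrt{\sum_{k,\ell}F^2 \cdot \sum_{k,\ell}G^2}$, and expanding the square gives
\[
\sum_{k,\ell}F(k,\ell)^2 \;=\; \sum_k \sum_{i,i'} u_{i,k}u_{i',k}\,\langle \sigma^i,\sigma^{i'}\rangle,
\]
where $\langle \sigma^i,\sigma^{i'}\rangle = |X_i\cap X_{i'}| + |Y_i\cap Y_{i'}| - |X_i\cap Y_{i'}| - |X_{i'}\cap Y_i|$ equals $M$ when $i=i'$ and is $O(M^{2/3})$ when $i\neq i'$, by Item~1 of \cref{lem:orthogonal partitions} (whose hypothesis $M \geq \log^3(4m^2)$ holds trivially, since $M=\phi(m)$ is exponential in $m\geq s_0$). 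Using $u_{i,k}\leq n/(mM)$ controls the diagonal contribution by $n|U|/m$, and using $\sum_i u_{i,k}\leq n/M$ (as $\bigcup_i A_{i,k}$ has size $n/M$) bounds the off-diagonal contribution by $O(n|U|/M^{1/3})$. Altogether $\sum_{k,\ell}F^2 \leq n|U|(1/m + O(M^{-1/3}))$, and analogously for $G$.

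Combining these estimates yields $|e_{G_r}(U,V) - \tfrac12|U||V|| \leq \tfrac12 n\sqrt{|U||V|}(1/m + O(M^{-1/3}))$. Applying this bound to both $(U,V)$ and $(A,B)$ and then using $\sqrt{|U||V|}\geq \delta n$ gives $|d_{G_r}(U,V) - d_{G_r}(A,B)| \leq (1/m + O(M^{-1/3}))/\delta$, which is comfortably at most $\delta$: the assumption $m=m_{r-1}\geq s_0 = \lceil 4/\delta^4\rceil$ forces $1/m \leq \delta^4/4$, while $M=\phi(m)$ is of exponential size in $m$, rendering the second term negligible. The main obstacle I anticipate is keeping the cancellations in the quadratic form $\sum \langle \sigma^i,\sigma^{i'}\rangle u_{i,k}u_{i',k}$ organized cleanly, but the sign reformulation reduces this precisely to the inner-product estimate that Item~1 of \cref{lem:orthogonal partitions} is tailor-made to supply, so the rest is routine bookkeeping.
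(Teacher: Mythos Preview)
Your argument is correct and takes a genuinely different route from the paper. The paper verifies $\delta$-regularity via a Chung--Graham--Wilson-type criterion (their \cref{lem:Chung-Graham-Wilson}, quoted from \cite{ADLRY}): it checks that every vertex in $B$ has degree $(\tfrac12\pm O(M^{-1/3}))n$ and that codegrees of pairs in distinct cells of $\mathcal B_{r-1}$ are $(\tfrac14\pm O(M^{-1/3}))n$, both via Item~1 of \cref{lem:orthogonal partitions}. You instead go straight for the discrepancy $e_{G_r}(U,V)-\tfrac12|U||V|$, using the $\pm1$ encoding of the partitions to write it as a bilinear form $\sum_{k,\ell}F(k,\ell)G(k,\ell)$ and then Cauchy--Schwarz; Item~1 of \cref{lem:orthogonal partitions} enters only through the inner products $\langle\sigma^i,\sigma^{i'}\rangle$. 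Your approach is self-contained (it avoids the auxiliary lemma) and is essentially an expander-mixing-lemma computation tailored to the block structure; the paper's approach is more modular, outsourcing the passage from degree/codegree control to regularity. Both hinge on the same estimate from Item~1, and both give the same quantitative outcome $|d_{G_r}(U,V)-d_{G_r}(A,B)|\leq O((1/m+M^{-1/3})/\delta)$. One cosmetic remark: your opening sentence says you will show the edge count differs from $\tfrac12|U||V|$ by at most $\delta|U||V|$, which would only give closeness to $\tfrac12$ rather than to $d_{G_r}(A,B)$; but you correctly fix this at the end by applying the bound to $(A,B)$ as well and using the triangle inequality.
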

    \noindent
    For the proof of Lemma \ref{lem:large partitions are quasirandom}, we need the following fact from \cite{ADLRY}.
    \begin{lemma}[{\cite[Lemma 3.2]{ADLRY}}]\label{lem:Chung-Graham-Wilson}
        Let $G$ be a bipartite graph with parts $A,B$ of size $n$ each. Let $d$ be the density of $G$ and let $2n^{-1/4} < \delta < \frac{1}{16}$. Suppose that
        \begin{enumerate}
            \item For all but at most $\frac{1}{8}\delta^4 n$ of the vertices $x \in B$, it holds that $|d_G(x) - dn| \leq \delta^4 n$;
            \item For every $B' \subseteq B$ with $|B'| \geq \delta n$, it holds that 
            $\sum_{x,y \in B'} \left( |N_G(x) \cap N_G(y)| - d^2n \right) < \frac{\delta^3}{2}n|B'|^2$.
        \end{enumerate}
        Then $G$ is $\delta$-regular. 
    \end{lemma}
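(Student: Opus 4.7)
The goal is to show that for arbitrary $A' \subseteq A$ with $|A'| \geq \delta n$ and $B' \subseteq B$ with $|B'| \geq \delta n$, one has $|d_G(A',B') - d| \leq \delta$. After fixing such $A'$ and $B'$, I would introduce the auxiliary function $g \colon A \to \mathbb{Z}_{\geq 0}$ defined by $g(a) := |N_G(a) \cap B'|$, so that $e_G(A',B') = \sum_{a \in A'} g(a)$. Thus the task reduces to bounding the deviation $\left|\sum_{a \in A'}(g(a) - d|B'|)\right|$, and the plan is to do this via a second-moment estimate on $g$ (over all of $A$) combined with Cauchy--Schwarz (restricted to $A'$).

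For the first moment, I would apply hypothesis (1): since $\sum_{a \in A} g(a) = \sum_{x \in B'} d_G(x)$, and since the at most $\tfrac{1}{8}\delta^4 n$ exceptional vertices in $B'$ contribute at most $\tfrac{1}{8}\delta^4 n \cdot n$ while the remaining vertices each satisfy $|d_G(x) - dn| \leq \delta^4 n$, one deduces $\sum_{a \in A} g(a) = dn|B'| + O(\delta^4 n^2)$. For the second moment, a standard double-counting identity gives
\[
\sum_{a \in A} g(a)^2 \;=\; \sum_{x,y \in B'} |N_G(x) \cap N_G(y)|,
\]
which, by hypothesis (2), is at most $d^2 n |B'|^2 + \tfrac{\delta^3}{2}n|B'|^2$. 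Expanding $\sum_{a}(g(a) - d|B'|)^2 = \sum_a g(a)^2 - 2d|B'|\sum_a g(a) + nd^2|B'|^2$ and substituting the first- and second-moment estimates will yield a variance bound of the shape
\[
\sum_{a \in A}(g(a) - d|B'|)^2 \;\leq\; \tfrac{\delta^3}{2}n|B'|^2 + O(\delta^4 n^2 |B'|).
\]

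The final step is Cauchy--Schwarz:
\[
\Bigl|\sum_{a \in A'}(g(a) - d|B'|)\Bigr| \;\leq\; \sqrt{|A'|}\,\sqrt{\sum_{a \in A}(g(a) - d|B'|)^2}.
\]
Plugging in the variance bound and using $|A'| \geq \delta n$ (so that $\delta^3 n \leq \delta^2 |A'|$), a direct calculation bounds this by $\delta|A'||B'|$, which is exactly the $\delta$-regularity conclusion. The one technical obstacle I anticipate is that the error term $O(\delta^4 n^2 |B'|)$ in the variance bound, coming from the imperfect first-moment control in hypothesis (1), is a priori of the same order as the main term $\tfrac{\delta^3}{2}n|B'|^2$ when $|B'|$ is near the threshold $\delta n$; absorbing it cleanly will require the quantitative hypothesis $\delta > 2n^{-1/4}$, which forces $n$ to be large enough in terms of $\delta^{-1}$ that the error becomes negligible relative to the main term and the prefactor $1/2$ in hypothesis (2) is enough slack to give the final bound of $\delta$ rather than a constant multiple of $\delta$.
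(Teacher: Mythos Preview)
The paper does not prove this lemma; it is quoted verbatim from \cite[Lemma 3.2]{ADLRY} and used as a black box. So there is nothing in the paper to compare your argument against.

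That said, your outline is the standard second-moment/Cauchy--Schwarz proof and it goes through. One small correction concerns your final paragraph: the lower bound $\delta > 2n^{-1/4}$ is \emph{not} what absorbs the error term. If you carry out the variance computation with explicit constants, the two error contributions are $2d|B'|\cdot\delta^4 n|B'| \leq 2\delta^4 n|B'|^2$, which is at most $\tfrac{1}{8}\delta^3 n|B'|^2$ by the upper bound $\delta < \tfrac{1}{16}$, and $2d|B'|\cdot\tfrac{1}{8}\delta^4 n^2 \leq \tfrac{1}{4}\delta^4 n^2|B'|$, which is at most $\tfrac{1}{4}\delta^3 n|B'|^2$ by the lower bound $|B'| \geq \delta n$. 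Hence the variance is at most $\tfrac{7}{8}\delta^3 n|B'|^2$, and Cauchy--Schwarz together with $|A'| \geq \delta n$ then gives exactly $|e(A',B')-d|A'||B'|| \leq \delta|A'||B'|$. The hypothesis $\delta > 2n^{-1/4}$ (equivalently $\delta^4 n > 16$) is only there to ensure that quantities such as $\tfrac{1}{8}\delta^4 n$ and $\delta n$ are not so small as to make the hypotheses vacuous.
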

    \begin{proof}[Proof of Lemma \ref{lem:large partitions are quasirandom}]
       We will show that Conditions 1--2 in Lemma \ref{lem:Chung-Graham-Wilson} are satisfied. 
        As in Section \ref{sec:construction}, write $m \coloneqq  m_{r-1}$, $M \coloneqq  \frac{m_r}{m_{r-1}}$, $\mathcal{A}_{r-1} = \{A_1,\dots,A_m\}$ and $\mathcal{B}_{r-1} = \{B_1,\dots,B_m\}$, and for each $i \in [m]$ let $A_{i_1},\dots,A_{i,M}$ (resp.\ $B_{i,1},\dots,B_{i,M}$) be the parts of $\mathcal{A}_r$ (resp.\ $\mathcal{B}_r$) contained in $A_i$ (resp.\ $B_i$). Let also $(X_i,Y_i)_{i=1}^m$ be the partitions of $[M]$ given by Lemma \ref{lem:orthogonal partitions}. Note that by the assumption of Lemma \ref{lem:large partitions are quasirandom}, we have $m \geq s_0$ and therefore $M = \phi(m) = \lfloor e^{m/16} \rfloor \geq e^{\Omega(s_0)}$. This also implies that $M \geq \log^3(8m^2)$ (as we assumed that $\varepsilon$ is at most some absolute constant, hence $s_0$ is at least some large constant), so we may apply Item 1 of \cref{lem:orthogonal partitions}.

        First, fix any $x \in B$ and let us consider the degree of $x$ in $G_r$. 
        Let $j \in [m]$ such that $x \in B_j$. 
        Fix any $1 \leq i \leq m$ and consider the edges between $x$ and $A_i$. 
        For convenience, set 
        $A_i^{(1)} = \bigcup_{k \in X_j} A_{i,k}$, 
        $A_i^{(2)} = \bigcup_{k \in Y_j} A_{i,k}$,
        $B_j^{(1)} = \bigcup_{k \in X_i} B_{j,k}$,
        $B_j^{(2)} = \bigcup_{k \in Y_i} B_{j,k}$.
        By definition, $G_r$ has all edges in 
        $A_i^{(1)} \times B_j^{(1)}$ and 
        $A_i^{(2)} \times B_j^{(2)}$ and no other edges between $A_i$ and $B_j$. By Item 1 of Lemma \ref{lem:orthogonal partitions}, $|X_i|,|Y_i|,|X_j|,|Y_j| = \frac{M}{2} \pm M^{2/3}$. This means that each of the sets 
        $A_i^{(1)},A_i^{(2)}$ has size 
        $(\frac{1}{2} \pm M^{-1/3}) |A_i|$.
        Therefore, the number of edges between $x$ and $A_i$ is 
        $\left( \frac{1}{2} \pm M^{-1/3} \right)|A_i|$. As this is true for every $1 \leq i \leq m$, it follows that $d_{G_r}(x) = (\frac{1}{2} \pm M^{-1/3})n = (\frac{1}{2} \pm \frac{\delta^4}{2})n$, using that $M \geq e^{\Omega(s_0)}$. It follows that the density $d$ of $G_r$ is $\frac{1}{2} \pm \frac{\delta^4}{2}$, and that for every $x \in B$ we have $|d_G(x) - dn| \leq \delta^4 n$.  

        Now we consider Item 2 in Lemma \ref{lem:Chung-Graham-Wilson}. 
        Fix any $B' \subseteq B$ with $|B'| \geq \delta n$. For convenience, put $f(x,y) \coloneqq |N_{G_r}(x) \cap N_{G_r}(y)| - d^2 n$ for $x,y \in B'$, and let
        $S \coloneqq \sum_{x,y \in B'}f(x,y)$.
        The pairs $x,y$ belonging to the same part of $\mathcal{B}_{r-1}$ contribute at most $|B'| \cdot \frac{n}{m} \cdot n = |B'|\frac{n^2}{m} \leq \frac{\delta^3}{4}n|B'|^2$ to $S$, using that $m \geq s_0$ and $|B'| \geq \delta n$. 
        Now consider the pairs $x,y$ belonging to different parts of $\mathcal{B}_{r-1}$. So fix $1 \leq j \neq j' \leq m$ such that $x \in B_j, y \in B_{j'}$. Fix any $1 \leq i \leq m$. As before, put $B_j^{(1)} = \bigcup_{k \in X_i} B_{j,k}$, 
        $B_j^{(2)} = \bigcup_{k \in Y_i} B_{j,k}$
        and similarly 
        $B_{j'}^{(1)} = \bigcup_{k \in X_{i}} B_{j',k}$, 
        $B_{j'}^{(2)} = \bigcup_{k \in Y_{i}} B_{j',k}$.
        Let $a,a' \in \{1,2\}$ such that $x \in B_j^{(a)}$ and 
        $y \in B_{j'}^{(a')}$. By the definition of $G_r$, the common neighborhood of $x,y$ in $A_i$ is 
        $\bigcup_{k \in Z}A_{i,k}$, \nolinebreak where
        $$
        Z \coloneqq  
        \begin{cases}
            X_i \cap X_{i'} & a=1, a'=1, \\
            X_i \cap Y_{i'} & a=1, a'=2, \\
            Y_i \cap X_{i'} & a=2, a'=1, \\
            Y_i \cap Y_{i'} & a=2, a'=2. \\
        \end{cases}
        $$
        By Item 1 of Lemma \ref{lem:orthogonal partitions}, in all cases we have $|Z| = \frac{M}{4} \pm M^{2/3}$. This means that the common neighborhood of $x,y$ in $A_i$ has size $(\frac{1}{4} \pm M^{-1/3})|A_i|$. Summing over all $1 \leq i \leq m$, we get that $|N_{G_r}(x) \cap N_{G_r}(y)| \leq (\frac{1}{4} + M^{-1/3})n$ and therefore
        $f(x,y) \leq (\frac{1}{4} + M^{-1/3})n - (\frac{1}{2} - M^{-1/3})^2 n \leq \frac{\delta^3}{4}n$, using that $M \geq e^{\Omega(s_0)}$. 
        It follows that
        $
        S \leq \frac{\delta^3}{4}n |B'|^2 + \frac{\delta^3}{4}n |B'|^2 = 
        \frac{\delta^3}{2}n |B'|^2.
        $
        Hence, both items of Lemma \ref{lem:Chung-Graham-Wilson} hold, implying that $G_r$ is $\delta$-regular.
    \end{proof}
    \noindent Given \cref{lem:large partitions are quasirandom}, it is now straightforward to show that all links in $H$ have small $\delta$-regular partitions.
    \begin{lemma}\label{lem:regular partitions in links}
    For every $v \in V(H)$, the link $L_H(v)$ of $v$ has a $\delta$-regular partition of size $O(\delta^{-8})$.
    \end{lemma}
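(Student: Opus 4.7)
The plan is to split into cases based on the part containing $v$; the case $v \in B$ is symmetric to $v \in A$. The key structural observation is that the decomposition $H = \sum_{r=1}^{t} 2^{-r} H_r$ with $H_r = G_r \times C_r$ gives each link a very simple layered form, which lets us build a homogeneous (and hence regular) partition by hand in each case, without invoking any further regularity machinery.

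Suppose first that $v \in C$, say $v \in C_r$. Then $L_H(v)$ is the weighted bipartite graph on $A \cup B$ of weight $2^{-r}$ on edges of $G_r$ and $0$ elsewhere. If $m_{r-1} \geq s_0$, then Lemma \ref{lem:large partitions are quasirandom} asserts that $G_r$ is $\delta$-regular; since all weights scale by the same factor $2^{-r} \leq 1$, the trivial partition $(\{A\},\{B\})$ is $\delta$-regular for $L_H(v)$. If instead $m_{r-1} < s_0$, the recursive definition of $(m_r)$ forces $m_r \leq s_0 \cdot \max(s_0, \phi(m_{r-1})) \leq s_0^2 = O(\delta^{-8})$. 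In this regime I use the equipartition $(\mathcal{A}_r, \mathcal{B}_r)$, of total size $2 m_r = O(\delta^{-8})$: by the very definition of $G_r$, every refined block $A_{i,k} \times B_{j,k'}$ is either complete or empty in $G_r$, so each block of the partition has constant weight and the partition is $0$-regular.

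For $v \in A$, the link $L_H(v)$ is a weighted bipartite graph on $B \cup C$, whose weight on a pair $(b,c)$ with $c \in C_r$ equals $2^{-r} \mathbbm{1}[(v,b) \in G_r]$. I partition $C$ into the $t$ parts $C_1, \ldots, C_t$, and partition $B$ into the (at most $2^t$) atoms of the set system $\{N_{G_r}(v) : r \in [t]\}$. Within every resulting block $B'' \times C_r$, the set $B''$ is either contained in $N_{G_r}(v)$ or disjoint from it, so the weight is uniformly $2^{-r}$ or uniformly $0$; hence the partition is $0$-regular. Its size is at most $t + 2^t$, and since $t \leq \frac{1}{4}\log_7(1/\varepsilon)$ we have $2^t \leq (1/\varepsilon)^{(\log 2)/(4\log 7)} \leq 1/\varepsilon \leq 1/\delta$, so the total is well within $O(\delta^{-8})$, using $\delta \leq \varepsilon$ and that $\varepsilon$ is sufficiently small.

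The main subtlety lies in the sub-case $v \in C$ with $m_{r-1} < s_0$: this is precisely where the unusual ``stunted'' growth of the sequence $(m_r)$ around the threshold $s_0$ pays off. Without the stunting, a single step could jump from $m_{r-1}$ just below $s_0$ to $m_r \approx \phi(s_0) = e^{\Omega(s_0)} = e^{\Omega(1/\delta^4)}$, which would preclude any polynomial-sized regular partition of $L_H(v)$ coming from $(\mathcal{A}_r, \mathcal{B}_r)$. The recursive definition is crafted so that exactly when Lemma \ref{lem:large partitions are quasirandom} does not apply ($m_{r-1} < s_0$), the next partition $(\mathcal{A}_r, \mathcal{B}_r)$ is still of size $O(\delta^{-8})$ and can be used directly.
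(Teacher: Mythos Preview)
Your proof is correct and follows essentially the same approach as the paper's: the same case split on the part containing $v$, the same use of Lemma~\ref{lem:large partitions are quasirandom} versus the partition $(\mathcal A_r,\mathcal B_r)$ for $v\in C$, and the same Venn-diagram partition for $v\in A$. One small slip: your intermediate bound $m_r \leq s_0 \cdot \max(s_0,\phi(m_{r-1}))$ does not by itself give $m_r \leq s_0^2$, since $\phi(m_{r-1})$ may exceed $s_0$; the correct observation (which the paper uses) is that the recursive rule forces $m_r/m_{r-1}\leq s_0$ whenever $m_{r-1}<s_0$, so $m_r < s_0^2$ follows directly.
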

    \begin{proof}
        Suppose first that $v \in C$, and let $1 \leq r \leq t$ such that $v \in C_r$. 
        Note that $L_H(v) = 2^{-r} \cdot G_r$.
        We consider two cases. 
        If $m_{r-1} \geq s_0$ then by Lemma \ref{lem:large partitions are quasirandom}, $G_r$, and hence also $L_H(v) = 2^{-r} \cdot G_r$, is $\delta$-regular, meaning that $(A,B)$ is a (trivial) $\delta$-regular partition of $L_H(v)$.
        Now suppose that $m_{r-1} < s_0$. 
        Then, by the definition of the sequence $m_0,\dots,m_t$, we have $m_r \leq m_{r-1} \cdot s_0 \leq s_0^2$ (it is here that we crucially use the definition of the sequence, making sure that $m_r$ is not exponentially larger than $s_0$). 
        Thus, the partitions $\mathcal{A}_r,\mathcal{B}_r$ have size at most $s_0^2 \leq 17/\delta^8$. Observe that for each $A' \in \mathcal{A}_r$, $B' \in \mathcal{B}_r$, $(A',B',C_r)$ is a complete or empty 3-partite 3-graph. This means that in $H$, either all edges in $(A',B',C_r)$ have weight $2^{-r}$, or all such edges have weight 0. Hence, $(\mathcal{A}_r,\mathcal{B}_r)$ is a $\delta$-regular partition of $L_H(v)$. 

        Next, suppose that $v \in A \cup B$. By symmetry, it suffices to handle the case that $v \in A$. For $1 \leq r \leq t$, let $N_r \subseteq B$ denote the neighborhood of $v$ in the graph $G_r$. 
        By the definition of $H_r$ and $H$, all edges in $N_r \times C_r$ have weight $2^{-r}$ in $L_H(v)$, while all edges in $(B \setminus N_r) \times C_r$ have weight 0 in $L_H(v)$.  
        Let $B_1,\dots,B_s$ be the atoms of the Venn diagram of $N_1,\dots,N_t$, and note that $s \leq 2^t \leq \frac{1}{\varepsilon}\leq \frac 1 \delta$. Then for each $1 \leq i \leq s$ and $1 \leq r \leq t$, all edges in $(B_i,C_r)$ have the same weight in $L_H(v)$. It follows that $(\{B_1,\dots,B_s\}, \{C_1,\dots,C_t\})$ is a $\delta$-regular partition of $L_H(v)$, as required. 
    \end{proof}

    The final step is showing that $H$ has no weakly $\varepsilon$-regular partitions with fewer than a tower-type number of parts. This step also follows the work of Gowers, so we recall some notation and ideas from \cite{Gowers}. For two sets $P,A$ and a real number $\beta$, we write $P \subset_\beta A$ if $\ab{P \cap A} \geq (1-\beta) \ab P$, or equivalently if $\ab{P \setminus A} \leq \beta \ab P$. For $\beta=0$, this is simply the usual subset relation. If $P_1,\dots,P_k$ and $A_1,\dots,A_m$ are two partitions of the same set, we say that the partition $\{P_1,\dots,P_k\}$ \emph{$\beta$-refines} $\{A_1,\dots,A_m\}$ if, for all but at most $\beta k$ indices $s \in [k]$, there exists some $i \in [m]$ such that $P_s \subset_\beta A_i$. For $\beta=0$, this is precisely the standard notion of refinement of partitions.
    We also remark that as long as $\beta<\frac 12$, there can be no ambiguity: if $P_s \subset_\beta A_i$, then $P_s {\nsubset_\beta} A_j$ for any $j \neq i$. We will ensure that $\beta<\frac 12$ in all that follows.

    \begin{lemma}\label{lem:beta-refinement}
        Let $P_1,\dots,P_k,Q_1,\dots,Q_k,R_1,\dots,R_k$ be partitions of $A,B,C$, respectively, and suppose that all parts have the same size up to a factor of two. 
        Suppose that all but $\varepsilon k^3$ of the triples $(P_a,Q_b, R_c)$ are weakly $\varepsilon$-regular in $H$. Suppose that $\varepsilon^{1/4} \leq \beta \leq \frac{1}{72}$. If $(P_1,\dots,P_k)$ and $(Q_1,\dots,Q_k)$ $\beta$-refine $\mathcal A_{r-1}$ and $\mathcal B_{r-1}$, respectively, then they $(7\beta)$-refine $\mathcal A_{r},\mathcal B_{r}$, respectively.
    \end{lemma}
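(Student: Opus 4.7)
The plan is a contradiction argument following the template of Gowers's tower-type lower bound. I will assume the conclusion fails and, by the symmetry of $G_r$ in its two graph coordinates, focus on the case that $(P_s)$ is the partition not $(7\beta)$-refining $\mathcal A_r$. Then at least $7\beta k$ indices $s$ have no $(7\beta)$-container in $\mathcal A_r$; since $(P_s)$ $\beta$-refines $\mathcal A_{r-1}$, at least $6\beta k$ of these --- call them \emph{spread} --- satisfy $P_s\subset_\beta A_{i(s)}$ for some $i(s)$ while $P_s\not\subset_{7\beta} A_{i(s),k'}$ for every $A_{i(s),k'}\in\mathcal A_r$.

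For each spread $s$ I will introduce the weights $\lambda^{(s)}_{k'}:=|P_s\cap A_{i(s),k'}|/|P_s\cap A_{i(s)}|$, and the two containment properties immediately give $\lambda^{(s)}_{k'}\leq(1-7\beta)/(1-\beta)\leq 1-6\beta$ for every $k'$. I then invoke Item~2 of \cref{lem:orthogonal partitions} with $\zeta=6\beta$, $\eta=\beta$, $\varepsilon_{\mathrm{orth}}=\beta$; its hypothesis $(1-\beta)(1-4\beta)\geq 1-6\beta+36\beta^2$ reduces to $\beta\leq 1/32$, which is comfortably satisfied since $\beta\leq 1/72$. This produces, for each spread $s$, a \emph{splitting} set $J(s)\subseteq[m_{r-1}]$ of size at least $\beta m_{r-1}$, such that for $j\in J(s)$ both $P_s^{X,j}:=P_s\cap\bigcup_{k'\in X_j}A_{i(s),k'}$ and $P_s^{Y,j}:=P_s\cap\bigcup_{k'\in Y_j}A_{i(s),k'}$ have size at least $\beta(1-\beta)|P_s|\gg\varepsilon|P_s|$, using $\varepsilon\leq\beta^4$.

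The heart of the argument is the density computation dictated by $G_r$. Every vertex of $P_s^{X,j}$ has $G_r$-neighborhood within $B_j$ equal to $B_j^{X,i(s)}:=\bigcup_{k'\in X_{i(s)}}B_{j,k'}$, while every vertex of $P_s^{Y,j}$ has $B_j$-neighborhood equal to the complementary $B_j^{Y,i(s)}$; Item~1 of \cref{lem:orthogonal partitions} gives that both halves have size $(\tfrac12\pm o(1))|B_j|$. Crucially, since $H$ restricted to any subset of $C_r$ equals $2^{-r}\cdot G_r$, I can sidestep contributions from levels $r'\neq r$ by choosing $R_c$ with $|R_c\cap C_r|\geq\varepsilon|R_c|$ (the averaging $\sum_c|R_c\cap C_r|=|C_r|=n/t$ yields $\Omega(k/t)$ such $R_c$'s) and applying weak $\varepsilon$-regularity twice to the subsets $(P_s^{X,j},Q_t,R_c\cap C_r)$ and $(P_s^{Y,j},Q_t,R_c\cap C_r)$ to get $2^{-r}\bigl|d_{G_r}(P_s^{X,j},Q_t)-d_{G_r}(P_s^{Y,j},Q_t)\bigr|\leq 2\varepsilon$. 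For any $Q_t$ with $Q_t\subset_{7\beta}B_{j,k'_0}$ and $k'_0\in X_{i(s)}$, the left-hand $G_r$-density gap is $\approx 1$, so the inequality forces $\varepsilon\geq 2^{-r-1}\geq 2^{-t-1}$; but $t\leq\tfrac14\log_7(1/\varepsilon)$ implies $2^{-t}\gg\varepsilon$, a contradiction.

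Counting these contradicting triples produces $\Omega(\beta^2 k^3/t)$ of them, which exceeds $\varepsilon k^3$ under $\varepsilon\leq\beta^4$ and $t=O(\log(1/\varepsilon))$, contradicting the hypothesis that all but $\varepsilon k^3$ triples are weakly $\varepsilon$-regular. The main obstacle, which I expect to require the most care, is the case when $(Q_t)$ \emph{also} fails $(7\beta)$-refinement: then too few $Q_t$'s are contained in a single sub-interval $B_{j,k'_0}$. There I will pair spread $s$'s with spread $t$'s and use pairs $(i,j)$ satisfying $j\in J(s)$ and $i\in J'(t)$, noting that $(P_s^{X,j},Q_t^{X,i},R_c\cap C_r)$ has all $G_r$-edges present while the average density $d_H(P_s,Q_t,R_c)$ is bounded away from $2^{-r}\cdot|R_c\cap C_r|/|R_c|$; a double-counting over $(i,j)$-pairs will show that enough joint-spread witnesses exist to produce the same order of contradicting triples.
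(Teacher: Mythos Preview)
Your overall strategy follows Gowers's template and is close to the paper's argument, but there are two issues, one minor and one substantive.

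First, the parameter choice $\eta=\beta$ in your invocation of \cref{lem:orthogonal partitions} is too small. With only $|J(s)|\geq\beta m_{r-1}$, the union $\bigcup_{j\in J(s)}B_j$ has size merely $\beta n$, which is swamped by the slack coming from the $\beta k$ container-less $Q_t$'s and the factor-two size variation; the counting that should lower-bound the number of $Q_t$ with $j(t)\in J(s)$ then gives nothing. The paper takes $\eta=5\beta$ (and $\varepsilon_{\mathrm{orth}}=2\varepsilon$), obtaining $|I|\geq 5\beta m$, which is just enough to guarantee at least $\beta k$ indices $u$ with $Q_u\subset_\beta B_h$ for some $h\in I$. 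This is easily repaired.

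Second, and more importantly, your case split on whether $(Q_t)$ already $(7\beta)$-refines $\mathcal B_r$ is unnecessary, and your handling of the ``obstacle case'' has a real gap: the double constraint $j(t)\in J(s)$ and $i(s)\in J'(t)$ asks two \emph{fixed} indices $i(s),j(t)$ to lie in two small sets of density $\approx\beta$, and your double-counting sketch gives no mechanism to force many pairs $(s,t)$ to satisfy both simultaneously. The paper sidesteps this entirely with the following observation: once $Q_u\subset_\beta B_h$ for some $h\in I$ (level-$(r{-}1)$ containment, which is \emph{given} by hypothesis), the two halves $W_X=Q_u\cap\bigcup_{k'\in X_{i}}B_{h,k'}$ and $W_Y=Q_u\cap\bigcup_{k'\in Y_{i}}B_{h,k'}$ are disjoint and together cover at least $(1-\beta)|Q_u|$, so \emph{one} of them automatically has size $\geq(1-\beta)|Q_u|/2\geq\varepsilon|Q_u|$. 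One then compares $d_H(V_X,W_X,R_\ell\cap C_r)$ with $d_H(V_Y,W_X,R_\ell\cap C_r)$ (or with $W_Y$, whichever half is large): in $G_r$ one pair is complete and the other empty, giving a density gap of $2^{-r}>2\varepsilon$. No refinement of $\mathcal B_r$ by $(Q_t)$ is ever used, so your obstacle case simply does not arise.
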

    We stress that in the statement of this lemma, we do not care about how the partition $(R_1,\dots,R_k)$ interacts with the given partition $(C_1,\dots,C_t)$ of $C$; all that matters is the fact that $(P_1,\dots,P_k)$ and $(Q_1,\dots,Q_k)$ roughly refine $\mathcal A_{r-1}$ and $\mathcal B_{r-1}$.
    \begin{proof}[Proof of \cref{lem:beta-refinement}]
        We prove the statement for $A$; the statement for $B$ is completely analogous. 
        As before, write $\mathcal{A}_{r-1} = \{A_1,\dots,A_m\}$ and $\mathcal{A}_r = \{A_{i,j} : i \in [m], j \in [M]\}$, where $m=m_{r-1}$ and $M \coloneqq  \frac{m_r}{m_{r-1}}$.
        Fix some $P_s$, and suppose that $P_s \subset_\beta A_i$ for some $i \in [m]$. Suppose that there is no $j \in [M]$ such that $P_s \subset_{7\beta} A_{i,j}$. For each $j \in [M]$, let $\mu_j \coloneqq \ab{P_s \cap A_{i,j}}/\ab{P_s}$. Note that $$\mu \coloneqq  \sum_{j=1}^M \mu_j = \sum_{j=1}^M \frac{\ab{P_s\cap A_{i,j}}}{\ab{P_s}} = \frac{\ab{P_s \cap A_i}}{\ab{P_s}}\geq 1-\beta,$$ since $P_s \subset_\beta A_i$. Additionally, $\mu_j \leq 1-7\beta$ for every $j \in [M]$ by the assumption that there is no $j$ with $P_s \subset_{7\beta} A_{i,j}$. 
        Now set $\lambda_j \coloneqq  \mu_j/\mu$, so that $\sum_{j=1}^M \lambda_j = 1$ and $\lambda_j \leq \frac{\mu_j}{1-\beta} \leq \frac{1-7\beta}{1-\beta} \leq 1-6\beta$ for every $j \in [M]$.
        By Item 2 of \cref{lem:orthogonal partitions} with $\zeta \coloneqq 6\beta$, $\eta \coloneqq  5\beta$ and $2\varepsilon$ in place of $\varepsilon$ (the condition $(1-\eta)(1-8\varepsilon) \geq 1-\zeta+\zeta^2$ in Lemma \ref{lem:orthogonal partitions} holds since $\beta \leq \frac{1}{72}$ and $\varepsilon \leq \frac{\beta}{16}$), we get that there is $I \subseteq [m]$ with 
        $|I| \geq 5\beta m$, such that for every $h \in I$ we have 
        \begin{equation}\label{eq:min lambdas}
        \min \left(\sum_{j \in X_h} \lambda_j, \sum_{j \in Y_h} \lambda_j\right) > 2\varepsilon.
        \end{equation}
        Recall that for at least $(1-\beta)k$ choices of $u \in [k]$, there is some $h \in [m]$ such that $Q_u \subset_\beta B_h$. 
        We claim that for at least $\beta k$ of the indices $u \in [k]$, there is $h \in I$ with $Q_u \subset_\beta B_h$. Indeed, suppose otherwise. Then for all but $2\beta k$ of the indices $u \in [k]$, it holds that $|Q_u \cap B_h| \geq (1-\beta)|Q_u|$ for some $h \in [m] \setminus I$. 
        Also, we have $|Q_u| \leq \frac{2n}{k}$ for every $u \in [k]$ by the assumption that the sizes of the parts $Q_1,\dots,Q_k$ differ by at most a factor of 2. It follows that 
        $$n = \sum_{u=1}^k |Q_u| \leq 2\beta k \cdot \frac{2n}{k} + \frac{1}{1-\beta}\sum_{h \in [m] \setminus I}|B_h| = 4\beta n + \frac{1}{1-\beta} (m - |I|) \cdot \frac{n}{m} \leq 4\beta n + \frac{1-5\beta}{1-\beta}n < n,$$ a contradiction. This proves our claim. 
        
        Now fix any $u \in [k]$ such that $Q_u \subset_\beta B_h$ for some $h \in I$ (there are at least $\beta k$ choices for $u$ by the above). Let $V_X \coloneqq P_s \cap \bigcup_{j \in X_h} A_{i,j}, V_Y \coloneqq P_s \cap \bigcup_{j \in Y_h} A_{i,j}$, and similarly $W_X \coloneqq Q_u \cap \bigcup_{j \in X_i} B_{h,j}, W_Y \coloneqq Q_u \cap \bigcup_{j \in Y_i} B_{h,j}$. By construction and \eqref{eq:min lambdas}, we have that $$
        \min(\ab{V_X}, \ab{V_Y}) = 
        \min \left(\sum_{j \in X_h} \mu_j, \sum_{j \in Y_h} \mu_j\right) \cdot \ab{P_s} \geq \min \left(\sum_{j \in X_h} \lambda_j, \sum_{j \in Y_h} \lambda_j\right) \cdot (1-\beta) \cdot \ab{P_s} \geq 
        \varepsilon \ab{P_s}.
        $$
        Additionally, $W_X$ and $W_Y$ are disjoint subsets of $Q_u$ with $\ab{W_X \cup W_Y} \geq (1-\beta)\ab{Q_u}$, hence $\max(\ab{W_X},\ab{W_Y}) \geq (1-\beta)\ab{Q_u}/2 \geq \varepsilon \ab{Q_u}$. Suppose that $\ab{W_X} \geq \varepsilon \ab{Q_u}$; the other case is handled symmetrically.

        In the graph $G_r$, we have that $(V_X, W_X)$ is a complete pair, whereas $(V_Y,W_X)$ is empty. This implies that $d_H(V_X,W_X,C_r)=2^{-r}$ and $d_H(V_Y,W_X,C_r)=0$.
        For $\ell \in [k]$, let us say that $\ell$ is \emph{$r$-small} if $\ab{R_\ell \cap C_r} < \varepsilon \ab{R_\ell}$, where we recall that $R_1,\dots,R_k$ is the given partition of $C$. We say that $\ell$ is \emph{$r$-large} if it is not $r$-small. We have
        \[
        \ab{C_r} =  \sum_{\ell \text{ $r$-large}} \ab{R_\ell \cap C_r} + \sum_{\ell \text{ $r$-small}} \ab{R_\ell \cap C_r} < \sum_{\ell \text{ $r$-large}} \ab{R_\ell \cap C_r} + \sum_{\ell=1}^t \varepsilon \ab{R_\ell} = \sum_{\ell \text{ $r$-large}} \ab{R_\ell \cap C_r} + \varepsilon \ab C.
        \]
        Using that $\ab{C_r}=\ab C/t$, we get
        \[
        \sum_{\ell \text{ $r$-large}} \ab{R_\ell \cap C_r} > \frac{\ab{C}}{t} - \varepsilon \ab C \geq \frac{\ab C}{2t},
        \]
        where we used that $\varepsilon < 1/(2t)$, which holds because $t \leq \log \frac 1 \varepsilon$ and as $\varepsilon$ is assumed small enough.
        
        On the other hand, since all parts $R_\ell$ have the same size up to a factor of two, we have that
        \[
        \sum_{\ell \text{ $r$-large}} \ab{R_\ell \cap C_r} \leq \sum_{\ell \text{ $r$-large}} \ab{R_\ell} \leq \frac{2\ab C}{k} \cdot \ab{\{\ell \in [k]:\ell\text{ is $r$-large}\}}.
        \]
        Combining these two inequalities, and using that $\sqrt \varepsilon < 1/(4t)$ (again since $\varepsilon$ is sufficiently small), we find that
        \[
        \ab{\{\ell \in [k]:\ell\text{ is $r$-large}\}} \geq \frac{k}{4t} > \sqrt \varepsilon k.
        \]
        The final observation is that if $\ell$ is $r$-large, then the triple $(P_s, Q_u,R_\ell)$ is not weakly $\varepsilon$-regular. Indeed, we have that
        \[
        d_H(V_X,W_X,R_\ell \cap C_r) - d_H(V_Y,W_X,R_\ell \cap C_r) = 2^{-r} \geq 2^{-t} > \varepsilon,
        \]
        while the sets $V_X,V_Y,W_X,R_\ell \cap C_r$ are subsets of $P_s,P_s,Q_u,R_\ell$, respectively, each with at least an $\varepsilon$-fraction of the vertices of the corresponding set.

        Recall that this argument worked for each of at least $\beta k$ choices for $u$, and at least $\sqrt \varepsilon k$ choices for $\ell$. Therefore, under the assumption that $P_s\subset_\beta A_i$ but there is no $j$ such that $P_s \subset_{7\beta} A_{i,j}$, we find that there are at least $\beta \sqrt \varepsilon \cdot k^2$ choices of $(u,\ell) \in [k]^2$ for which the triple $(P_s,Q_u,R_\ell)$ is not weakly $\varepsilon$-regular. This can happen for at most $\beta k$ values of $s$, because the given partitions $(P_i,Q_i,R_i)_{i=1,\dots,k}$ are weakly $\varepsilon$-regular, and as $\beta > \varepsilon^{1/4}$. Also, at most $\beta k$ choices of $s$ do not satisfy $P_s \subset_\beta A_i$ for any $i$. Hence, we conclude that $\{P_1,\dots,P_k\}$ does indeed $(7\beta)$-refine $\mathcal A_r$.
    \end{proof}
    \noindent It is now a simple matter to complete the proof of \cref{thm:lower bound}.
    \begin{proof}[Proof of Theorem \ref{thm:lower bound}]
        Let $H$ be the weighted 3-partite 3-graph defined in Section \ref{sec:construction}. By Lemma \ref{lem:regular partitions in links}, every link $L_H(v)$ in $H$ has a $\delta$-regular partition of size $O(\delta^{-8})$. Next, we show that every weakly $\varepsilon$-regular equipartition of $H$ has size at least $\twr\left( \Omega(\log \frac 1 \varepsilon) \right)$.
        So fix such an equipartition, given by partitions $P_1,\dots,P_k,Q_1,\dots,Q_k,R_1,\dots,R_k$ of $A,B,C$, respectively. Setting $\beta_r \coloneqq 7^r \varepsilon^{1/4}$, we claim that for $0 \leq r \leq t$, the partitions $(P_1,\dots,P_k)$ and $(Q_1,\dots,Q_k)$ $\beta_r$-refine $\mathcal{A}_r$ and $\mathcal{B}_r$, respectively. Indeed, this follows by induction on $r$: the base case $r=0$ is trivial, and the induction step is given by Lemma \ref{lem:beta-refinement}. By the choice of $t = \frac{1}{4}\log_7(\frac{1}{\varepsilon}) - 3$, we get that $\beta_r < \frac{1}{72}$ for every $r$, as required by Lemma \ref{lem:beta-refinement}. It is easy to see that if an equipartition $\mathcal{P}$ $\beta$-refines an equipartition $\mathcal{A}$ for $\beta < \frac{1}{2}$, then $|\mathcal{P}| \geq |\mathcal{A}|/2$. Hence, $k \geq m_t/2$. Now, observe that $m_t \geq \twr\left( \Omega(\log \frac 1 \varepsilon) \right)$. Indeed, by the definition of the sequence $m_0,\dots,m_t$, we have $m_r = m_{r-1} \cdot \phi(m_{r-1})$ for all but at most one choice of $r$. Also, for all but $O(1)$ choices of $r$, we have $\phi(m_{r-1}) = \lfloor e^{m_{r-1}/16} \rfloor$, meaning that $m_r$ is exponential in $m_{r-1}$. It is easy to see that this implies that $m_t \geq \twr\left( \Omega(t) \right)$, as required.

        So far we proved Theorem \ref{thm:lower bound} for weighted 3-graphs. To obtain a (non-weighted) 3-graph, we let $H^*$ be the 3-partite 3-graph on $A,B,C$ obtained by sampling each edge $e \in A \times B \times C$ with probability $H(e)$, where $H(e)$ is the weight of $e$ in $H$. Using a Chernoff-type bound (e.g.\ Lemma \ref{lem:Hoeffding}), it is easy to see that with high probability the following holds:
         \begin{itemize}
            \item For all $X \subseteq A, Y \subseteq B, Z \subseteq C$ with $|X|,|Y|,|Z| = \Omega(n)$, it holds that $|d_{H^*}(X,Y,Z)- d_{H}(X,Y,Z)| = o(1)$.
            \item For all $v \in A$ and $Y \subseteq B, Z \subseteq C$ with $|Y|,|Z| = \Omega(n)$, 
            $|d_{L_{H^*}(v)}(Y,Z) - d_{L_{H}(v)}(Y,Z)| = o(1)$, and the analogous statement holds for $v \in B$ and $v \in C$.
        \end{itemize}
        Consequently, $H^*$ satisfies the assertion of the theorem with $\varepsilon,\delta$ replaced with $\varepsilon/2,2\delta$, respectively.
    \end{proof}

    \section{Concluding remarks}\label{sec:conclusion}
    In this section we record three corollaries of our results, none of which we believe to be quantitatively optimal. It would be very interesting to improve the quantitative aspects of any of them. We restrict our attention to $3$-graphs, but the corollaries and questions have natural analogues in all higher uniformities.

    \begin{proposition}\label{prop:regular implies homogeneous}
        For every $D \geq 1$ and $\varepsilon > 0$ there exists $\delta > 0$ such that the following holds. Let $H$ be a $3$-partite $3$-graph, and suppose that $H$ has slicewise VC-dimension at most $D$. If $H$ is weakly $\delta$-regular, then $d(H) \in [0,\varepsilon] \cup [1-\varepsilon,1]$.
    \end{proposition}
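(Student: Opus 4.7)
The plan is to combine two ingredients: the classical fact that graphs of bounded VC-dimension admit $\varepsilon'$-homogeneous partitions of polynomial size in $1/\varepsilon'$ (due to \cite{MR2341924,FPS,MR2815610}), together with \cref{thm:upper bound}, which lifts such polynomial bounds from links to the whole $3$-graph. The contradiction we will derive is this: if $H$ is very weakly regular but has an intermediate density $d(H)$, then every cell of a sufficiently fine homogeneous equipartition inherits density within $\delta$ of $d(H)$, so no cell can be $\varepsilon$-homogeneous, violating the definition of the partition.

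Concretely, here is how I would proceed. Assume (without loss of generality, via \cref{rem:partite to general} if needed) that $H$ is $3$-partite with parts of size $n$. Set $\varepsilon' \coloneqq c_3 (\varepsilon/2)^6$ as required by \cref{thm:upper bound}. Since $H$ has slicewise VC-dimension at most $D$, the link $L_H(v)$ of every vertex $v$ is a bipartite graph of VC-dimension at most $D$, so by \cite{FPS} it admits an $\varepsilon'$-homogeneous partition of size at most $r \leq (1/\varepsilon)^{O_D(1)}$. Applying \cref{thm:upper bound} with parameter $\varepsilon/2$, I obtain an $(\varepsilon/2)$-homogeneous equipartition $(\mathcal{P}_1,\mathcal{P}_2,\mathcal{P}_3)$ of $V(H)$ into $s \leq 2^{(1/\varepsilon)^{O_D(1)}}$ parts per side, with every part of size $n/s$. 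I then define $\delta \coloneqq \min(\varepsilon/2, 1/s) = 2^{-(1/\varepsilon)^{O_D(1)}}$; this is the $\delta$ produced by the proposition.

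To close the argument, I would observe that each part in $\mathcal{P}_i$ has size $n/s \geq \delta n$, so the assumption that $H$ is weakly $\delta$-regular forces $|d(X,Y,Z) - d(H)| \leq \delta \leq \varepsilon/2$ for every triple $(X,Y,Z) \in \mathcal{P}_1 \times \mathcal{P}_2 \times \mathcal{P}_3$. Now suppose for contradiction that $d(H) \in (\varepsilon, 1-\varepsilon)$. Then every such triple would have density in $(\varepsilon/2,\, 1-\varepsilon/2)$, so no triple of parts would be $(\varepsilon/2)$-homogeneous. Summing $|X||Y||Z|$ over all (equal-sized) triples of parts then yields $s^3 \cdot (n/s)^3 = n^3$, contradicting the bound $(\varepsilon/2)\, n^3$ imposed by the $(\varepsilon/2)$-homogeneity of the equipartition. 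Hence $d(H) \in [0,\varepsilon] \cup [1-\varepsilon,1]$, as required.

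The main ``obstacle'' is really just quantitative bookkeeping: keeping track of how $\varepsilon'$, $r$, $s$, and $\delta$ depend on $\varepsilon$ and $D$, so that the parts in the equipartition are large enough to be seen by weak $\delta$-regularity. The conceptual work is done by \cref{thm:upper bound} and by the graph-case polynomial homogeneous partition lemma; once both are in hand, the proof is a clean contradiction argument with no genuine technical difficulties. It may also be worth remarking (though it is not needed for the proof) that the resulting $\delta$ depends on $\varepsilon$ as $\delta = 2^{-(1/\varepsilon)^{O_D(1)}}$, matching (up to the implicit dependence on $D$) the lower bound from Terry's construction \cite{2404.01293}; improving this to a polynomial dependence would be a natural open question.
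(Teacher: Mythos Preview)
Your proof is correct and follows essentially the same route as the paper, which also deduces the proposition directly from \cref{thm:VC1} (equivalently, from bounded-VC links plus \cref{thm:upper bound}) by observing that weak $\delta$-regularity forces every triple of parts in the resulting small homogeneous equipartition to have density within $\delta$ of $d(H)$. One minor quantifier slip: you set $\delta \coloneqq \min(\varepsilon/2, 1/s)$ using the \emph{actual} number of parts $s$, which depends on $H$, whereas $\delta$ must be chosen uniformly in $H$; the fix is simply to take $\delta$ to be the reciprocal of the universal upper bound $2^{(1/\varepsilon)^{O_D(1)}}$ on $s$ guaranteed by \cref{thm:upper bound}, after which each part still has size at least $\delta n$ and your contradiction argument goes through verbatim.
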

    In other words, if $H$ is both weakly $\delta$-regular and of bounded slicewise VC-dimension, then $H$ must be almost empty or almost complete. \cref{prop:regular implies homogeneous} follows immediately from \cref{thm:VC1}, as the latter implies that $H$ has a $\varepsilon$-homogeneous partition into at most $2^{\poly(1/\varepsilon)}$ parts. If $H$ is weakly $\delta$-regular for $\delta < 2^{-\poly(1/\varepsilon)}$, then in fact the density of each of these $\varepsilon$-homogeneous triples must be very close to the global density of $H$, implying that $H$ itself is very sparse or very dense.

    This argument shows that we may take $\delta$ to depend single-exponentially on $\varepsilon$ in \cref{prop:regular implies homogeneous}. However, we conjecture that this bound can be improved to polynomial.
    \begin{conjecture}\label{conj:regular homog poly}
    We may take $\delta = \varepsilon^{O(1)}$ in \cref{prop:regular implies homogeneous}.
    \end{conjecture}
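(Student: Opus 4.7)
The plan is to bypass the single-exponential homogeneous partition from \cref{thm:VC1} and argue directly inside the tripartite structure of $H$. Suppose for contradiction that $d(H) = p \in (\varepsilon, 1-\varepsilon)$ and that $H$ is weakly $\delta$-regular for $\delta = \varepsilon^C$ with $C$ a large constant to be chosen. By the Fox--Pach--Suk polynomial homogeneity theorem, each link $L(c)$ admits an $\varepsilon'$-homogeneous partition of size $r = \poly(1/\varepsilon)$, with $\varepsilon' = \poly(\varepsilon)$ chosen sufficiently small; in particular, $E(L(c))$ is well-approximated by a union of at most $r^2$ combinatorial rectangles of $A \times B$. Weak $\delta$-regularity applied to triples $(A, B, Z)$ forces $d_{L(c)}(A, B) \approx p$ for almost all $c \in C$, and since $p \geq \varepsilon$ each such $c$ admits a \emph{witness rectangle} $A^c \times B^c \subseteq A \times B$ of area at least $\poly(\varepsilon) \cdot n^2$ on which $L(c)$ has density at least $1 - \varepsilon'$.

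The second step is to cluster the vertices $c \in C$ by similarity of their witness rectangles into $\poly(1/\varepsilon)$ equivalence classes, thereby extracting a subcluster $C^* \subseteq C$ of size at least $\poly(\varepsilon) \cdot n$ together with a common rectangle $A^* \times B^*$ whose sides have size at least $\poly(\varepsilon) \cdot n$, such that $L(c)$ has density at least $1 - 2\varepsilon'$ on $A^* \times B^*$ for every $c \in C^*$. Given such a triple, $d_H(A^*, B^*, C^*) \geq 1 - 2\varepsilon' \gg p$, which directly violates weak $\delta$-regularity as soon as $\delta$ is polynomially smaller than the gap $1 - \varepsilon - 2\varepsilon'$. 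To carry out the clustering, the plan is to exploit the complementary slicewise VC-information: for each $a \in A$, the link $L(a)$ is a bipartite graph between $B$ and $C$ of bounded VC-dimension, so by Haussler's packing lemma the family $\{N_C(a, b) : b \in B\}$ contains only $\poly(1/\varepsilon)$ distinct sets up to $\varepsilon$-Hamming distance, and symmetrically for $b \in B$. Sampling a small number of vertices from $A$ and $B$ and clustering $c \in C$ according to its interaction pattern with those samples should, heuristically, yield the desired polynomial partition of $C$ and align the witness rectangles.

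The principal obstacle is precisely this clustering step. The set system $\{E(L(c)) : c \in C\} \subseteq 2^{A \times B}$ need \emph{not} have bounded VC-dimension, even though each individual $L(c)$ does, so a direct Haussler packing on the ``dual'' ground set $A \times B$ is unavailable; one must instead access the witness rectangles indirectly through the $A$- and $B$-side links and argue that these separate projections cohere into a single common rectangle $A^* \times B^*$. Naive enumeration over all possible rectangles in $A \times B$ reintroduces the exponential blowup suffered by \cref{thm:VC1}, which is exactly what must be avoided. Designing a simultaneous packing argument using all three slicewise VC-bounds that keeps the dependence polynomial throughout is the key technical barrier, and it plausibly requires genuinely new ideas beyond those in the proofs of \cref{thm:VC1} and \cref{thm:upper bound}.
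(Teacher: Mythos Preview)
The statement you are attempting is \emph{Conjecture}~\ref{conj:regular homog poly}; the paper does not prove it, and explicitly presents it as open. There is therefore no paper proof to compare against, and any complete argument would constitute new mathematics.

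Your outline is a sensible heuristic, and you are candid that it does not close. The first paragraph is essentially correct: weak $\delta$-regularity applied to $(A,B,Z)$ for all large $Z\subseteq C$ does force $d_{L(c)}(A,B)=p\pm 2\delta$ for all but at most $2\delta n$ vertices $c$, and the existence of a dense witness rectangle in each such link follows from the $\poly(1/\varepsilon)$-sized homogeneous partition as you say. The difficulty is exactly where you locate it: aligning the witness rectangles across a $\poly(\varepsilon)$-fraction of $C$.

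Two remarks that sharpen the obstacle. First, the paper itself explains (just after the conjecture) why the ``obvious'' route via a counting lemma fails: weak regularity does not support two-sided counting for non-linear $T$, so one cannot hope to count induced copies and subtract. Second, the orientation example in the paper---where each $c\in C$ is assigned independent random halves $A_c\subseteq A$, $B_c\subseteq B$---shows precisely the failure mode of your clustering idea when restricted to the $C$-side: the witness rectangles $A_c\times B_c$ are pairwise nearly orthogonal, so no $\poly(\varepsilon)$-fraction of $C$ shares a common rectangle of $\poly(\varepsilon)$-area. That example is not a counterexample to the conjecture (it has unbounded slicewise VC-dimension, since the links $L(a)$ contain random sets $B_c$ as neighbourhoods), but it demonstrates that any successful argument must genuinely use the VC bound on the $A$- and $B$-side links, not merely on the $C$-side. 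Your final paragraph acknowledges this; the point is that the coherence step between the three packings is not a technicality but the entire content of the conjecture.
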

    A useful perspective on \cref{conj:regular homog poly} has to do with embedding induced sub(hyper)graphs, which we now discuss.
    Let $B$ be a bipartite graph. We say that $B$ is a \emph{bipartitely induced subgraph} of a graph $G$ if $G$ contains an induced subgraph isomorphic to some supergraph of $B$, obtained by only adding new edges in the two parts of $B$, without adding or removing any edges across the bipartition. It is not hard to see that $G$ has bounded VC-dimension if and only if there is some fixed $B$ which is not a bipartitely induced subgraph of $G$. 

    This immediately implies that a $3$-graph $H$ has bounded slicewise VC-dimension if and only if it avoids $T$ as a tripartitely induced subhypergraph, where $T$ is some tripartite $3$-graph one of whose three parts is a singleton. In this language, we see that \cref{prop:regular implies homogeneous} can be viewed as an induced embedding lemma: in contrapositive, it says that if $H$ is weakly $\delta$-regular and has edge density bounded away from $0$ and $1$, then $H$ contains a tripartitely induced copy of any fixed tripartite $T$ one of whose parts is a singleton. One motivation for \cref{conj:regular homog poly} is that most embedding lemmas that one encounters can be proved with polynomial dependencies. Moreover, one could try to prove such an induced embedding lemma by first proving a counting lemma with two-sided error bounds, and then applying inclusion-exclusion. However, an unusual feature of this problem is that this corresponding counting lemma is simply false: it is well-known (see e.g.\ \cite{MR2595699,MR2864650}) that weak $\delta$-regularity is too weak to support a counting lemma (with two-sided error bounds) for any $T$ that is not linear, and hence proving \cref{conj:regular homog poly} would require a completely different technique.
    Additionally, in most settings when one can prove a counting lemma, it holds in any ``orientation'': namely, we should be able to embed the three parts of $T$ into the three parts of $H$ according to any bijection $[3]\to [3]$. However, a simple example shows that this is not true for \cref{prop:regular implies homogeneous}. Namely, consider the $3$-partite $3$-graph $H$ on parts $A,B,C$ obtained as follows: each vertex $c \in C$ picks uniformly random subsets $A_c \subseteq A, B_c\subseteq B$, and then $E(H)=\{(a,b,c):c \in C,a \in A_c, b \in B_c\}$. With high probability $H$ is weakly $o(1)$-regular and has density $\frac 14 +o(1)$. But if we let $T$ be the $3$-graph on $\{x,y_1,y_2,z_1,z_2\}$ with edge set $E(T) = \{(x,y_1,z_1),(x,y_2,z_2)\}$, then it is easy to see that $H$ contains no tripartitely induced copy of $T$ for which $x \in C$. This is not a contradiction to \cref{prop:regular implies homogeneous}, as there \emph{are} induced copies of $T$ with $x \in A\cup B$, but this example demonstrates another subtle feature of this unusual induced counting lemma, and shows why proving \cref{conj:regular homog poly} may be more challenging than proving other similar-looking induced embedding lemmas.
    
    Another natural statement that follows directly from \cref{thm:VC1}, and which also suggests an avenue for attacking \cref{conj:regular homog poly}, is the following partite hypergraph analogue of R\"odl's theorem \cite{MR837962} on induced $F$-free graphs. To make the analogy clearer, we continue using the language of forbidden tripartitely induced subgraphs, rather than discussing bounded slicewise VC-dimension.

    \begin{proposition}\label{prop:EH}
     Let $T$ be a tripartite $3$-graph one of whose parts is a singleton. For every $\varepsilon>0$, there exists $\delta>0$ such that the following holds.
        If $H$ is a $3$-partite $3$-graph on parts $V_1 \cup V_2 \cup V_3$, each of size $n$, and if $H$ contains no tripartitely induced copy of $H$, then there exist $V_i' \subseteq V_i$ with $\ab{V_i'} \geq \delta n$ such that $d(V_1',V_2',V_3') \in [0,\varepsilon] \cup [1-\varepsilon,1]$.
    \end{proposition}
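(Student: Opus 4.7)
The plan is to deduce \cref{prop:EH} directly from \cref{thm:VC1}, via two short observations. First, the hypothesis of avoiding a tripartitely induced copy of $T$ translates into a bound on the slicewise VC-dimension of $H$. Second, an $\varepsilon$-homogeneous equipartition of $H$ automatically contains a homogeneous triple of parts of the required size.

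For the first step, let $x$ be the unique vertex in the singleton part of $T$ and set $B_T \coloneqq L_T(x)$, a fixed finite bipartite graph. For any $v \in V_i$, a bipartitely induced copy of $B_T$ in the link $L_H(v)$ (viewed as a bipartite graph between the two parts $V_j, V_k$ with $\{i,j,k\} = \{1,2,3\}$) extends to a tripartitely induced copy of $T$ in $H$ in which the singleton of $T$ is mapped to $V_i$. Since we assume $H$ contains no tripartitely induced copy of $T$ (for any bijection between the parts of $T$ and $V_1, V_2, V_3$), every link $L_H(v)$ avoids $B_T$ (or its transpose) as a bipartitely induced subgraph. The standard characterization of bounded VC-dimension via forbidden bipartitely induced subgraphs, recalled in the discussion preceding \cref{prop:EH}, now implies that each link $L_H(v)$ has VC-dimension at most some constant $D = D(T)$; hence $H$ has bounded slicewise VC-dimension.

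For the second step, apply \cref{thm:VC1} to obtain an $\varepsilon$-homogeneous equipartition of $H$ with parts $V_i = X_i^{(1)} \sqcup \dots \sqcup X_i^{(k)}$, where $k \leq 2^{(1/\varepsilon)^{O(1)}}$. Each part has size $n/k$, and by the definition of $\varepsilon$-homogeneity the sum of $\ab{X_1^{(a)}}\ab{X_2^{(b)}}\ab{X_3^{(c)}}$ over non-$\varepsilon$-homogeneous triples is at most $\varepsilon\ab{V_1}\ab{V_2}\ab{V_3}$. Thus at most an $\varepsilon$-fraction of the $k^3$ triples of parts are non-homogeneous, so at least one triple $(V_1', V_2', V_3')$ is $\varepsilon$-homogeneous. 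Setting $\delta \coloneqq 1/k = 2^{-(1/\varepsilon)^{O(1)}}$ completes the proof.

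Given \cref{thm:VC1}, the argument above is essentially immediate; the main obstacle is not in the proof itself but rather in the quantitative dependence. The value $\delta = 2^{-\mathrm{poly}(1/\varepsilon)}$ is inherited directly from the bound in \cref{thm:VC1}, and Terry's matching single-exponential lower bound shows that this route cannot yield $\delta$ polynomial in $\varepsilon$. As the authors remark for \cref{conj:regular homog poly}, improving to $\delta = \varepsilon^{O(1)}$, the analogue of R\"odl's theorem for graphs, appears to require genuinely new ideas beyond the homogeneous partition framework.
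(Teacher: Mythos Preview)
Your proof is correct and follows essentially the same approach as the paper: the paper also deduces \cref{prop:EH} immediately from \cref{thm:VC1} by taking $(V_1',V_2',V_3')$ to be any $\varepsilon$-homogeneous triple from the partition, obtaining the same bound $\delta \geq 2^{-\poly(1/\varepsilon)}$. You simply spell out more carefully the passage from ``no tripartitely induced copy of $T$'' to bounded slicewise VC-dimension, which the paper leaves implicit (having discussed it just before the proposition).
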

    Indeed, \cref{prop:EH} follows immediately from \cref{thm:VC1}, as we may take $(V_1',V_2',V_3')$ to be any $\varepsilon$-homogeneous triple from the $\varepsilon$-homogeneous partition of $H$. In particular, \cref{thm:VC1} implies that we may take $\delta \geq 2^{-\poly(1/\varepsilon)}$ in \cref{prop:EH}. We conjecture that this dependence can also be improved to polynomial.
    \begin{conjecture}\label{conj:EH}
        We may take $\delta = \varepsilon^{O(1)}$ in \cref{prop:EH}.
    \end{conjecture}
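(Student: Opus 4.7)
The plan is to deduce Conjecture~\ref{conj:EH} from the polynomial bipartite R\"odl theorem of Fox--Pach--Suk~\cite{FPS} via a sampling--pigeonhole argument that lifts a polynomial bipartite statement to a polynomial tripartite one.

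First, I would reduce to the link level. Since $T$ has a singleton part, mapped (WLOG) into $V_1$, the structure of $T$ is encoded by a bipartite graph $B$ between its other two parts. The hypothesis that $H$ contains no tripartitely induced $T$ (with singleton in $V_1$) translates to the condition that for every $v\in V_1$, the bipartite link $L_H(v)\subseteq V_2\times V_3$ contains no bipartitely induced copy of $B$. In particular, every such link has VC-dimension at most some constant $D=D(B)$. If the combinatorial type of $T$ forces the singleton to one of the other parts, we permute the roles; if either mapping is consistent, we pick the one we prefer.

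Second, apply the Fox--Pach--Suk polynomial bipartite R\"odl theorem to each link, obtaining, for every $v\in V_1$, subsets $A_v\subseteq V_2$, $C_v\subseteq V_3$ with $|A_v|,|C_v|\geq \varepsilon^{O(1)}n$ and $d_{L_H(v)}(A_v,C_v)\in[0,\tfrac{\varepsilon}{2}]\cup[1-\tfrac{\varepsilon}{2},1]$. It then suffices to aggregate these per-link witnesses: find one pair $(A,C)$ with $|A|,|C|\geq \varepsilon^{O(1)}n$ and a set $V_1'\subseteq V_1$ with $|V_1'|\geq \varepsilon^{O(1)}n$ such that $d_{L_H(v)}(A,C)\in[0,\varepsilon]\cup[1-\varepsilon,1]$ in a \emph{consistent} direction for all $v\in V_1'$; the triple $(V_1',A,C)$ is then the witness required. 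For the aggregation, the natural plan is a Sauer--Shelah/sampling argument: take random subsamples $S_2\subseteq V_2$ and $S_3\subseteq V_3$ of polylogarithmic size. Since the links lie in a VC-dimension-$D$ family, Sauer--Shelah implies that the restrictions $L_H(v)|_{S_2\times S_3}$ take at most $\mathrm{poly}(1/\varepsilon)$ distinct values as $v$ ranges over $V_1$, so by pigeonhole a $\mathrm{poly}(\varepsilon)$-fraction of $v$'s share a common sample-trace and (after a further pigeonhole) a common direction of homogeneity.

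The main obstacle is precisely the final aggregation step. The Fox--Pach--Suk argument produces $(A_v,C_v)$ by an intricate iterative shrinking procedure, and there is no obvious reason that its output should lie in a polynomial-size menu determined by a small sample, nor that $(A_v,C_v)$ and $(A_{v'},C_{v'})$ should agree in any useful way for distinct $v,v'$ having the same sample-trace. Closing this gap would likely require a genuinely new \emph{uniform} polynomial R\"odl theorem: for any family of bipartite graphs of VC-dimension $D$ on a common ground set, produce a $\mathrm{poly}(1/\varepsilon)$-size menu of pairs such that every member of the family is $\varepsilon$-homogeneous on at least one menu entry. A weaker ``one-sided'' variant, where the menu gives only the $V_2$-side candidates and the $V_3$-side is recovered per-$v$ from its trace, may be more tractable and might suffice when combined with a second application of the same argument (with the roles of $V_2$ and $V_3$ swapped). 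Establishing such a uniform bipartite theorem is, to my knowledge, open, and I expect it to be the crux of any proof of \cref{conj:EH}.
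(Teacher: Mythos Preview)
This statement is presented in the paper as an open \emph{conjecture}; the paper gives no proof and explicitly flags it as such. So there is no ``paper's proof'' to compare against, and your proposal should be read as a proof \emph{plan} for an open problem rather than a proof.

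Your reduction to links is correct, and applying the Fox--Pach--Suk theorem link-by-link to produce per-vertex witnesses $(A_v,C_v)$ is fine. You also honestly identify the aggregation step as the crux and acknowledge that the required ``uniform polynomial R\"odl theorem'' is open. That diagnosis is accurate.

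However, the specific mechanism you propose for aggregation is flawed. You write that ``since the links lie in a VC-dimension-$D$ family, Sauer--Shelah implies that the restrictions $L_H(v)|_{S_2\times S_3}$ take at most $\mathrm{poly}(1/\varepsilon)$ distinct values.'' This conflates two different notions of VC-dimension. Each individual link $L_H(v)$, viewed as a bipartite graph, has VC-dimension at most $D$; but the family $\{L_H(v):v\in V_1\}$, viewed as a set system on the ground set $V_2\times V_3$, need \emph{not} have bounded VC-dimension. Sauer--Shelah would bound the number of traces on $S_2\times S_3$ only in terms of the latter quantity, which is essentially the Fox--Pach--Suk ``strong'' VC-dimension (equivalently, Terry's VC$_2$-dimension). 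Bounded slicewise VC-dimension does not control this: indeed, the whole point of Terry's single-exponential lower bound (and of the discussion surrounding \cref{thm:VC1}) is that there are $3$-graphs with bounded slicewise VC-dimension whose edge-link set system is genuinely complex. So the pigeonhole you want does not follow from the hypothesis.

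In short: your high-level plan and your identification of the obstacle are sound, but the Sauer--Shelah step as written does not work, and removing it leaves you exactly at the open problem you name at the end.
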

    Note that \cref{conj:EH} would immediately imply \cref{conj:regular homog poly}. Moreover, it is an appealing statement for other reasons; for example, \cref{conj:EH} would imply the Erd\H os--Hajnal conjecture for $3$-partite $3$-graphs with no tripartitely induced copy of $T$. That is, assuming \cref{conj:EH}, a standard argument (see e.g.\ \cite[Conjecture 7.1]{MR2455625}) shows that if $H$ has no tripartitely induced copy of $T$, then there exist $V_i'' \subseteq V_i$ such that $\ab{V_i''} = n^{\Omega(1)}$, and such that the triple $(V_1'',V_2'',V_3'')$ is either complete or empty. We remark that one must work in this tripartite setting in order to obtain such an Erd\H os--Hajnal-type result: \cite[Theorem 1.2]{MR3217709} implies that there exist $H$ with no tripartitely induced copy of $T$ and with no clique or independent set of size greater than $\Theta(\log n)$.

    Nonetheless, we are still able to prove an analogue of R\"odl's theorem \cite{MR837962} in the non-partite setting, as follows.
    \begin{proposition}\label{prop:rodl}
        Let $T$ be a tripartite $3$-graph one of whose parts is a singleton. For every $\varepsilon>0$, there exists $\delta>0$ such that the following holds. If $H$ is a $3$-graph with no tripartitely induced copy of $T$, then there exists $U \subseteq V(H)$ with $\ab U \geq \delta \ab{V(H)}$ such that $d(U) \in [0,\varepsilon]\cup [1-\varepsilon,1]$.
    \end{proposition}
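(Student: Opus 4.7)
The plan is to deduce \cref{prop:rodl} from \cref{thm:VC1} by applying a hypergraph Ramsey-type argument to the homogeneous partition it produces. The first step is to verify that the $T$-tripartitely-induced-free hypothesis implies $H$ has bounded slicewise VC-dimension: since $T$ has a singleton part $\{x\}$, its link $L_T(x)$ is a fixed bipartite graph, and the absence of a tripartitely induced copy of $T$ in $H$ translates directly to each link $L_H(v)$ being free of $L_T(x)$ as a bipartitely induced subgraph, which forces every link of $H$ to have VC-dimension bounded by a constant depending only on $T$. Applying \cref{thm:VC1} (via \cref{rem:partite to general} to handle the non-partite setting) with parameter $\varepsilon' = \varepsilon^C$ for a suitable absolute constant $C$ then yields an $\varepsilon'$-homogeneous equipartition $\P = \{P_1,\dots,P_s\}$ of $V(H)$ with $s \leq 2^{\poly(1/\varepsilon)}$ equal-sized parts.

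Next, I would classify each ordered triple of parts $(P_i,P_j,P_k) \in \P^3$ (allowing repetitions, as in the non-partite definition of homogeneity) as \emph{red} if $d(P_i,P_j,P_k) \leq \varepsilon'$, \emph{blue} if $d(P_i,P_j,P_k) \geq 1-\varepsilon'$, and \emph{yellow} otherwise. By the homogeneity guarantee, the total weight $\sum |P_i||P_j||P_k|$ over yellow triples is at most $\varepsilon' n^3$, so yellow triples account for only a $\varepsilon'$-fraction of the total weight.

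The main step is to extract a subfamily $S \subseteq \P$ of size $t$ such that every ordered triple from $S^3$ shares the same non-yellow color. I would first remove the small Markov-fraction of parts that participate in unusually many yellow triples, and then apply two-color hypergraph Ramsey on the cleaned-up family; the Erd\H{o}s--Rado bound $R_3(t,t) \leq 2^{2^{O(t)}}$ produces a monochromatic family of size $t = \Omega(\log\log s) = \Omega(\log(1/\varepsilon))$. Setting $U = \bigcup_{P \in S} P$, a direct edge count (using that every triple of parts contributing to $e(H[U])$ has density at most $\varepsilon'$ in the red case, or at least $1-\varepsilon'$ in the blue case) gives $d(U) \in [0,\varepsilon] \cup [1-\varepsilon,1]$, while $|U| = t \cdot n/s = \Omega(n/2^{\poly(1/\varepsilon)})$. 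This proves the proposition with $\delta = 2^{-\poly(1/\varepsilon)}$.

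The hard part will be the Ramsey step, both because hypergraph Ramsey's tower-type dependence is responsible for the final $\delta$ being only $2^{-\poly(1/\varepsilon)}$ (rather than the conjecturally polynomial bound of \cref{conj:EH}), and because some care is needed in the yellow cleanup to ensure that the monochromatic set is truly red or blue despite the nonzero yellow budget. A polynomial bound on $\delta$ would likely require a completely different approach not routed through a homogeneous partition.
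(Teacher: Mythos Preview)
Your overall strategy matches the paper's sketch: apply \cref{thm:VC1} via \cref{rem:partite to general} to obtain a homogeneous partition, strip out the non-homogeneous (``yellow'') triples, run $3$-uniform Ramsey on the surviving parts, and take the union. However, the quantitative execution contains a genuine gap.

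The main problem is the density computation for $U$. Ramsey's theorem only controls the colour of unordered triples of \emph{distinct} parts; it says nothing about triples of the form $(P_i,P_i,P_j)$ or $(P_i,P_i,P_i)$. When you expand $e(H[U])$, the edges lying inside a single part or inside two parts account for a $\Theta(1/t)$ fraction of $\binom{|U|}{3}$, so in the red case you only obtain $d(U)\le \varepsilon' + O(1/t)$. To force this below $\varepsilon$ you need $t\ge \Omega(1/\varepsilon)$, not merely $t=\Omega(\log(1/\varepsilon))$. Extracting $\Omega(1/\varepsilon)$ monochromatic parts via Ramsey requires the cleaned-up family to have size at least $R_3(\Omega(1/\varepsilon))=2^{2^{\Omega(1/\varepsilon)}}$; the Tur\'an-type independence bound in a $3$-graph of edge density $\varepsilon'$ only yields $\Theta((\varepsilon')^{-1/2})$ yellow-free parts, so one must take $\varepsilon'$ double-exponentially small in $\varepsilon$. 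This in turn makes $s=2^{(1/\varepsilon')^{O(1)}}$ triple-exponential in $1/\varepsilon$, and hence $\delta=t/s$ triple-exponentially small---exactly as the paper states. Your claimed bound $\delta=2^{-\poly(1/\varepsilon)}$ is therefore too strong by two levels of exponentiation.

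A smaller issue is the yellow cleanup itself. Removing a Markov fraction of parts still leaves yellow triples among the survivors, so two-colour Ramsey is not directly applicable, and three-colour Ramsey could land you in an all-yellow clique that the bound $\varepsilon' s^3$ on yellow triples is far too weak to exclude when $t=\log\log s$. The paper avoids this by using Tur\'an's theorem for $3$-graphs to pass to a subfamily with \emph{no} yellow triples before invoking two-colour Ramsey.
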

    The deduction of \cref{prop:rodl} is relatively standard, so we only sketch it. First, by \cref{thm:VC1} and \cref{rem:partite to general}, we can find an $\varepsilon$-homogeneous partition of $H$,
    as the assumption that there is no tripartitely induced copy of $T$ implies that all links have bounded VC-dimension. 
    We now use Tur\'an's theorem for hypergraphs to pass to a large number of parts containing no non-homogeneous triples, and then apply Ramsey's theorem for hypergraphs to pass to yet another subset in which either all triples have density at most $\varepsilon$, or all triples have density at least $1-\varepsilon$. The union of these parts is then the desired set $U$. 

    The proof sketched above shows that $\delta$ can be taken to depend triple-exponentially on $\varepsilon$ (one exponential from the application of \cref{thm:upper bound}, then two more from the application of the $3$-uniform Ramsey theorem). We are uncertain about the true dependence.
    \begin{problem}
    What is the optimal dependence of $\delta$ on $\varepsilon$ in \cref{prop:rodl}?
    \end{problem}

    \paragraph{Acknowledgments:} Part of this work was conducted while the authors were visiting the SwissMAP research center at Les Diablerets. We thank them for their welcome and for providing an excellent working environment. We also thank Artem Chernikov and the anonymous referees for helpful comments on an earlier version of this paper.
\bibliographystyle{yuval}
\bibliography{refs.bib}

\appendix

\section{Proof of Lemma \ref{lem:orthogonal partitions}}\label{sec:appendix}
We will need Hoeffding's inequality (see \cite[Theorem A.1.4]{AlonSpencer}):
    \begin{lemma}[Hoeffding's inequality]\label{lem:Hoeffding}
    Let $Z \sim \Bin(n,p)$. Then 
    $$
    \mathbb{P}[Z \geq np + t],\mathbb{P}[Z \leq np - t] \leq e^{-2t^2/n}.
    $$
    \end{lemma}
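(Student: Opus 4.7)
The plan is to prove Hoeffding's inequality via the standard Chernoff bound / moment generating function method. Write $Z = X_1 + \dots + X_n$ as a sum of i.i.d.\ Bernoulli$(p)$ random variables. For any $\lambda > 0$, Markov's inequality applied to $e^{\lambda(Z-np)}$ gives
\[
\mathbb{P}[Z \geq np + t] = \mathbb{P}[e^{\lambda(Z-np)} \geq e^{\lambda t}] \leq e^{-\lambda t} \cdot \mathbb{E}[e^{\lambda(Z-np)}] = e^{-\lambda t} \prod_{i=1}^n \mathbb{E}[e^{\lambda(X_i - p)}].
\]
So the task reduces to controlling the moment generating function of a single centered summand $Y_i \coloneqq X_i - p$, which takes values in the interval $[-p, 1-p]$ of length $1$.

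The main obstacle, and the key technical input, will be the following form of Hoeffding's lemma: if $Y$ is a mean-zero random variable supported in an interval $[a,b]$, then $\mathbb{E}[e^{\lambda Y}] \leq e^{\lambda^2 (b-a)^2/8}$. I would prove this in the usual way: by convexity of $y \mapsto e^{\lambda y}$, bound $e^{\lambda y} \leq \frac{b-y}{b-a} e^{\lambda a} + \frac{y-a}{b-a} e^{\lambda b}$ pointwise on $[a,b]$, then take expectations and use $\mathbb{E}[Y]=0$ to obtain $\mathbb{E}[e^{\lambda Y}] \leq \frac{b}{b-a} e^{\lambda a} - \frac{a}{b-a} e^{\lambda b}$. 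Writing the right-hand side as $e^{L(\lambda)}$ with $L(\lambda) = \lambda a + \log(1 - \theta + \theta e^{\lambda(b-a)})$ for $\theta = -a/(b-a) \in [0,1]$, a direct calculation shows $L(0) = L'(0) = 0$ and $L''(\lambda) \leq (b-a)^2/4$ for all $\lambda$ (since $L''$ has the form $u(1-u)(b-a)^2$ for some $u \in [0,1]$). A second-order Taylor expansion then yields $L(\lambda) \leq \lambda^2 (b-a)^2/8$, giving the claimed bound.

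Applying this with $b-a=1$ to each $Y_i$ gives $\mathbb{E}[e^{\lambda Y_i}] \leq e^{\lambda^2/8}$, hence $\mathbb{E}[e^{\lambda(Z-np)}] \leq e^{n\lambda^2/8}$ by independence. Plugging back,
\[
\mathbb{P}[Z \geq np+t] \leq e^{-\lambda t + n\lambda^2/8}
\]
for every $\lambda > 0$. The right-hand side is minimized at $\lambda = 4t/n$, which gives the exponent $-2t^2/n$ and yields the desired upper-tail bound $\mathbb{P}[Z \geq np + t] \leq e^{-2t^2/n}$.

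Finally, the lower-tail bound follows by symmetry: either apply the same argument to $-Y_i$ (which is supported in $[-(1-p), p]$, again an interval of length $1$, and still has mean zero), or observe that $n - Z \sim \Bin(n, 1-p)$ so $\mathbb{P}[Z \leq np - t] = \mathbb{P}[n-Z \geq n(1-p)+t] \leq e^{-2t^2/n}$. This completes the proof. The only nontrivial step is the single-variable bound on $\mathbb{E}[e^{\lambda Y_i}]$; everything else is the standard Markov/optimize-$\lambda$ routine.
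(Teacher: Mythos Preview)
Your argument is correct: this is the standard proof of Hoeffding's inequality via the exponential moment method, with Hoeffding's lemma supplying the bound $\mathbb{E}[e^{\lambda(X_i-p)}] \leq e^{\lambda^2/8}$ for a centered Bernoulli, followed by optimization in $\lambda$. The paper itself does not prove this lemma at all; it is stated with a reference to \cite[Theorem A.1.4]{AlonSpencer} and used as a black box, so your write-up actually supplies more detail than the paper does.
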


    \begin{proof}[Proof of Lemma \ref{lem:orthogonal partitions}]
    We choose each partition $(X_i,Y_i)$, $1 \leq i \leq m$, randomly, by including each element of $[M]$ into one of the sets $X_i,Y_i$ with probability $\frac{1}{2}$, independently of all other choices. Thus both $|X_i|$ and $|Y_i|$ are distributed as $\Bin(M,\frac 12)$, and in particular $\mathbb{E}[|X_i|] = \mathbb{E}[|Y_i|] = \frac{M}{2}$. By Lemma \ref{lem:Hoeffding}, the probability that $| |X_i| - \frac{M}{2}| \geq M^{2/3}$ is at most $2e^{-2M^{1/3}}$. Similarly, fixing $1 \leq i < i' \leq m$, each of $|X_i \cap X_{i'}|, |X_i \cap Y_{i'}|, ,|Y_i \cap X_{i'}|, |Y_i \cap Y_{i'}|$ has expected value $\frac{M}{4}$, and by Lemma \ref{lem:Hoeffding}, the probability that it deviates from its expectation by more than $M^{2/3}$ is at most $2e^{-2M^{1/3}}$. 
    There are $4\binom{m}{2}$ such events (4 per each pair $1 \leq i < i' \leq m$).
    By taking the union bound over all $1 \leq i \leq m$ (for the first statement) and $1 \leq i < i' \leq m$ (for the second statement), we get that the probability that Item 1 fails is at most $(m + 4\binom{m}{2}) \cdot 2e^{-2M^{1/3}} \leq 
    2m^2 \cdot 2e^{-2M^{1/3}} \leq \frac 12$, where the inequality holds if $M \geq \log^3(8m^2)$. Thus, Item 1 holds with probability at least $\frac 12$.

    Now consider Item 2. 
    For distinct $j,j' \in [M]$, let $z_{j,j'}$ be the number of indices $i$ for which $j,j'$ are on the same side of the partition $(X_i,Y_i)$.
    Let $\mathcal{A}$ be the event that $z_{j,j'} \leq \frac{3m}{4}$ for all $j,j'$. We claim that $\mathbb{P}[\mathcal{A}] > \frac 12$. Indeed, fixing $j,j'$, we have $z_{j,j'} \sim \Bin(m,\frac{1}{2})$. By Lemma \ref{lem:Hoeffding}, $\mathbb{P}[z_{j,j'} \geq 3m/4] \leq e^{-2 \cdot (1/4)^2 m} = e^{-m/8}$. By the union bound over $j,j'$, the probability that $\mathcal{A}$ fails is at most $\binom{M}{2} \cdot e^{-m/8} < \frac 12$, using that $M \leq e^{m/16}$. 
    
    We conclude that with positive probability, Item 1 and $\mathcal{A}$ hold. Now we show that $\mathcal{A}$ implies Item 2. Put $g_i(\lambda) \coloneqq  \left| \sum_{j \in X_i} \lambda_j - \sum_{j \in Y_i} \lambda_j \right|$. We have
    \begin{align*}
    \sum_{i=1}^m g_i(\lambda)^2 &= \sum_{i=1}^m \left[ \sum_{j=1}^M \lambda_j^2 \; \; + 
    \sum_{j,j' \in \binom{X_i}{2} \cup \binom{Y_i}{2}} 2\lambda_j \lambda_{j'} \; - 
    \sum_{j\in X_i, j' \in Y_i} 2\lambda_j\lambda_{j'} \right] \\
    &=
    m \sum_{j=1}^M \lambda_j^2 + \sum_{j,j' \in \binom{M}{2}}\left(4z_{j,j'} - 2m\right)\lambda_j \lambda_{j'} \leq 
    m \left( \sum_{j=1}^M \lambda_j^2 + \sum_{j,j' \in \binom{M}{2}} \lambda_j \lambda_{j'} \right) 
    \\ &= 
    \frac{m}{2}\left( \sum_{j=1}^M \lambda_j^2 + \left( \sum_{j=1}^M \lambda_j \right)^2 \right) = 
    \frac{m}{2}\left( \sum_{j=1}^M \lambda_j^2 + 1 \right),
    \end{align*}
    where the inequality uses that $\mathcal{A}$ happened. Convexity implies that under the constraints $0 \leq \lambda_j \leq 1-\zeta$ for every $j$ and $\sum_{j=1}^M \lambda_j = 1$, the function $\sum_{j=1}^M \lambda_j^2$ is maximized when some $\lambda_j$ equals $1-\zeta$ and another $\lambda_j$ equals $\zeta$. Hence, $\sum_{j=1}^M \lambda_j^2 \leq (1-\zeta)^2 + \zeta^2 = 1-2\zeta + 2\zeta^2$. Plugging this into the above, we get that 
    $
    \sum_{i=1}^m g_i(\lambda)^2 \leq m(1-\zeta + \zeta^2) \leq m(1-\eta)(1-4\varepsilon).
    $ 
    Hence, the number of $1 \leq i \leq m$ with $g_i(\lambda)^2 \geq 1-4\varepsilon$ is at most $(1-\eta)m$. If $g_i(\lambda)^2 < 1-4\varepsilon$ then $g_i(\lambda) < 1-2\varepsilon$, which can only happen if $\min\left( \sum_{j \in X_i}\lambda_j, \sum_{j \in Y_i} \lambda_j \right) > \varepsilon$. This proves that Item 2 in the lemma holds.   
    \end{proof}

\end{document}